\DeclarePairedDelimiter\floor{\lfloor}{\rfloor}
\renewcommand{\textbf}[1]{\begingroup\bfseries\mathversion{bold}#1\endgroup}
\newtheorem{thma}{Theorem}
\newtheorem{thm}{Theorem}[section]
\newtheorem{corollary}[thm]{Corollary}
\newtheorem{prop}[thm]{Proposition}
\newtheorem{lemma}[thm]{Lemma}
\newtheorem{claim}[thm]{Claim}
\theoremstyle{definition}
\newtheorem{remark}[thm]{Remark}
\newcommand{\R}{\mathbb R}
\newcommand{\Z}{\mathbb Z}
\newcommand{\N}{\mathbb N}
\newcommand{\Vol}{\mathsf{Vol}}
\numberwithin{equation}{section}
\def\XXint#1#2#3{{\setbox0=\hbox{$#1{#2#3}{\int}$}
    \vcenter{\hbox{$#2#3$}}\kern-.5\wd0}}
\def\blfootnote{\xdef\@thefnmark{}\@footnotetext}
\newcommand{\subjclass}[2][2020]{%
  \let\@oldtitle\@title%
  \gdef\@title{\@oldtitle\footnotetext{#1 \emph{Mathematics subject classification.} #2}}%
}
\date{date}
\begin{document}

\title{On rectifiability of Delone sets in intermediate regularity}
\subjclass{51F30, 30L05, 28A75}
\author{Irene Inoquio-Renteria and Rodolfo Viera}

\newcommand{\Addresses}{{
  \bigskip
  \footnotesize
\textbf{Irene Inoquio-Renter\'{i}a},\\ \textsc{Universidad de La Serena, Departamento de Matemáticas, Avda. Juan Cisternas 1200, La Serena, Chile.}\par\nopagebreak
\textit{E-mail address: }\texttt{irene.inoquio@userena.cl}\\

\textbf{Rodolfo Viera},\\
\textsc{Universidad Central de Chile, Facultad de Ingeniería y Arquitectura, Avda. Francisco de Aguirre 0405, La Serena, Chile.}\par\nopagebreak
\textit{E-mail addresses:}
\texttt{rvieraq.docente@ucen.cl, r.vieraq@gmail.com}
}}
\date{\today}
\maketitle
\begin{abstract}
   In this work, we deal with Delone sets and their rectifiability under different classes of regularity. By pursuing techniques developed by Rivi\`ere and Ye, and Aliste-Prieto, Coronel and Gambaudo, we give sufficient conditions for a specific Delone set to be equivalent to the standard lattice by bijections having regularity in between bi-Lipschitz and bi-Hölder-homogeneous. From this criterion, we extend a result of McMullen by showing that, for any dimension $d\geq 1$, there exists a threshold of moduli of continuity $\mathcal{M}_d$, including the class of the H\"{o}lder ones, such that for every $\omega\in\mathcal{M}_d$, any two Delone sets in $\R^d$ within a certain class
    cannot be distinguished under bi-$\omega$-equivalence. This class accounts for the decreasing rate of the density deviation of a Delone set, with respect to a limit density. We also extend a result due to Aliste, Coronel, and Gambaudo, which establishes that every linearly repetitive Delone set in $\R^d$ is rectifiable, by extending it to a broader class of repetitive behaviors. Moreover, we show that for the modulus of continuity $\omega(t)=t(\log(1/t))^{1/d}$, every $\omega$-repetitive Delone set in this class is equivalent to the standard lattice by a bi-$\omega$-homogeneous map. Finally, we address a continuous problem related to the previous ones about finding solutions to the prescribed volume form equation in intermediate regularity, thereby extending the results of Rivière and Ye. Some interesting research directions are highlighted.
   

\end{abstract}

\maketitle

\section{Introduction}

A \emph{Delone set} in $\R^d$ is a subset  $X$ that is both discrete and coarsely dense in a uniform way; this means that there are positive constants $\varrho,\vartheta$ such that the distance between any two distinct points in $X$ is at least $\varrho$, and any ball of radius $\vartheta$ contains at least one point of $X$. In the last years, there has been a lot of work in understanding Delone sets from different points of view in mathematics, e.g., Dynamical Systems, Metric Geometry, and Mathematical Physics, specially since its applications in the modeling of solid materials and, particularly, since the outstanding discovery of quasicrystals by Schechtmann and his research team in the eighties \cite{quasi}; see \cite{aper} for a more detailed exposition of these topics. (Physical-)Quasicrystals are point configurations that exhibit long-range order without being periodic, and a major task is to determine how well {\em regular/ordered} they are. Given this, the study of the geometry of Delone sets becomes relevant to address this highly non-trivial problem.  
\medskip

A nice approach to measure the {\em regularity} of a Delone set in $\R^d$ is to understand to what extent it differs from a more regular discrete structure, like a $d$-dimensional lattice in $\R^d$. We say that a Delone set $X$ in $\R^d$ is \emph{rectifiable} if it is bi-Lipschitz equivalent to the lattice  $\Z^d$, which means there exists a bijection $F:X \to \Z^d$ and $L\geq 1$ such that:
\[
(\forall x,y\in X):\qquad\frac{1}{L}\|x-y\|\leq\|F(x)-F(y)\|\leq L\|x-y\|.
\]

The question of whether two Delone sets are bi-Lipschitz equivalent was posed by Gromov (see for instance \cite[Section $3.24_+$]{Gromov}), who specifically asked whether every Delone set in the Euclidean plane $\mathbb{R}^2$ is bi-Lipschitz equivalent to $\mathbb{Z}^2$. Independent counterexamples of this were provided by Burago and Kleiner \cite{BurKl}, as well as McMullen \cite{McMu}, and explicit constructions were provided by Cortez and Navas in \cite{CN}; the second-named author extended this in \cite{RV}, where the lower bi-Lipschitz constant is replaced by a more relaxed condition and it is shown that irregular Delone sets still exist under this notion. McMullen further showed that by relaxing the bi-Lipschitz distortion of the distances to a bi-Hölder-homogeneous equivalence, all Delone sets in $\mathbb{R}^d$ become equivalent. Years later, Burago and Kleiner \cite{BurK2} provided affirmative solutions to Gromov's question and introduced a rectifiability criterion in two dimensions. This result was subsequently extended to higher dimensions by Aliste, Coronel, and Gambaudo \cite{linrep}, who used it to show that linearly repetitive Delone sets are always rectifiable; a well-known example of this is the point set of the vertices of the tiles of a Penrose tiling \cite{aper} and the recently found aperiodic mono-tiling \cite{apmono}.
\medskip

Motivated by the work of Burago \& Kleiner, and McMullen, recently in \cite{irregsep}, Dymond and Kalu\v{z}a addressed the problem of the existence of non-rectifiable Delone sets for regularity classes in between bi-Lipschitz and bi-Hölder homogeneous; 
let us make this more precise. Given a modulus of continuity $\omega:(0,1)\to (0,\infty)$ (see Section \ref{ssec: notations} below for the precise definition), we say that a map $F:X\to Y$ between two Delone sets $X, Y$ in $\R^d$, is {\em $\omega$-homogeneous} if there exists a positive constant $C$ such that for all $R>0$
and for all $x,y\in B_R$:
\[
\|F(x)-F(y)\|\leq CR\omega\left(\frac{\|x-y\|}{R}\right).
\]
The map $F$ is called {\em bi-$\omega$-homogeneous} if it is bijective and $F, F^{-1}$ are $\omega$-homogeneous. A Delone set $X\subset\R^d$ is said to be {\em $\omega$-rectifiable} if there exists a bi-$\omega$-homogeneous map from $X$ to $\Z^d$. In \cite{irregsep} Dymond and Kalu\v{z}a introduced this notion of rectifiability and they proved that for every $d\geq 2$, there exists $\alpha=\alpha(d)\in (0,1)$ and a Delone set $X\subset\R^d$ that is not $\omega_{\alpha}$-rectifiable, for the modulus of continuity
\[
\omega_{\alpha}(t):=t\left(\log\frac{1}{t}\right)^{\alpha}.
\]
Additionally, Dymond and Kalu\v{z}a in \cite{omegdisp} show that $\omega_{\alpha(d)}$-equivalence is, in fact, a weaker type of equivalence than bi-Lipschitz, by providing instances of Delone sets that are $\omega_{\alpha}$-rectifiable but not bi-Lipschitz equivalent to $\Z^d$.
\medskip

By employing techniques introduced by Rivi\'ere and  Ye in \cite{RivYe}, as well as  Aliste, Coronel and Gambaudo \cite{linrep}, we give sufficient conditions for a large class of Delone sets to be $\omega$-rectifiable (see Theorem \ref{thm: sufcondomegdel} below). From this, we obtain our first main result, establishing that there is a threshold of moduli of continuity for which two Delone sets belonging to this class cannot be distinguished by $\omega$-equivalence.

\medskip

 Let $X\subset\R^d$ be a Delone set. For $\rho>0$ and a bounded measurable set $A\subset\R^d$, define the density deviation 
    \begin{equation}
        e_{X,\rho}(A):=\max\left\{\frac{\rho\cdot\Vol(A)}{|A\cap X|},\frac{|A\cap X|}{\rho\cdot\Vol(A)}\right\}.
    \end{equation}
    For each $k\in\N$, define $E_{\rho}(k)$ to be the supremum of the $e_{X,\rho}(C)$, where the supremum is taken over all the $d$-dimensional integer hypercubes $C$ with sidelength $k$.

\medskip

For any given $\rho>0$, we denote by $\mathcal{D}_{\rho,d}$ be the class of Delone sets $X$ in $\R^d$ satisfying that:
\begin{equation}\label{eq: mindensdev}
    E_\rho(2^{i-1})-1\lesssim\frac{1}{1+i}.
\end{equation}
    
From the work \cite{linrep}, it is known that $D_{\rho, d}$ contains linearly repetitive Delone sets having density $\rho$. Later, in the proof of Theorem \ref{thm: reprect}, we shall prove that $D_{\rho,d}$ also contains $r(\log r)^{p}$-repetitive Delone sets in $\R^d$, for $p<1/d$.
\medskip

With these notations, our first main result reads as follows.
\begin{thma}\label{thm: A}
    For $d\geq 1$ consider $\mathcal{M}_{d}$ being the class of the moduli of continuity $\omega:(0,1)\to (0,\infty)$ such that
    \begin{equation}\label{eq: thmA}
        \sum_{i=2}^{\infty}\left(\frac{1}{2^{i-1}\omega(1/2^{i-1})}\right)^{1/d}<\infty,
    \end{equation}
    Then for any $\omega\in\mathcal{M}_{d}$ and any $\rho>0$, every Delone set $X\in\mathcal{D}_{\rho,d}$ is $\omega$-rectifiable.
\end{thma}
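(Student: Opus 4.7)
The plan is to invoke the sufficient condition for $\omega$-rectifiability established earlier (Theorem \ref{thm: sufcondomegdel}) and verify its hypothesis against the two standing assumptions: $\omega\in\mathcal{M}_d$ and $X\in\mathcal{D}_{\rho,d}$. The rectifying bijection $F:X\to\Z^d$ will arise as an infinite telescoping composition of correctors $\mathrm{Id}+u_i$, one per dyadic scale $R_i=2^{i-1}$, each constructed via a Rivi\`ere--Ye type solution to the prescribed-Jacobian equation whose amplitude is controlled by the local density defect $\epsilon_i:=E_\rho(2^{i-1})-1$. Standard estimates give $\|u_i\|_{L^\infty}\lesssim R_i\,\epsilon_i^{1/d}$, and these bounds propagate into an $\omega$-modulus for the limiting map as soon as a weighted series combining $\epsilon_i^{1/d}$ with $\omega$ converges. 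After routine bookkeeping the criterion of Theorem \ref{thm: sufcondomegdel} should take the form
\[
\sum_{i\geq 2}\frac{\epsilon_i^{1/d}}{\bigl(2^{i-1}\,\omega(2^{-(i-1)})\bigr)^{1/d}}<\infty,
\]
or a close variant in which the same two factors appear.

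Once this reduction is in place the verification is immediate. The hypothesis $X\in\mathcal{D}_{\rho,d}$ yields $\epsilon_i\lesssim 1/(1+i)$, so in particular $\epsilon_i^{1/d}\leq C$ uniformly in $i$. Substituting this bound,
\[
\sum_{i\geq 2}\frac{\epsilon_i^{1/d}}{\bigl(2^{i-1}\,\omega(2^{-(i-1)})\bigr)^{1/d}}\;\leq\; C\sum_{i\geq 2}\frac{1}{\bigl(2^{i-1}\,\omega(2^{-(i-1)})\bigr)^{1/d}},
\]
and the right-hand side is finite precisely because $\omega\in\mathcal{M}_d$, i.e.\ by the defining inequality \eqref{eq: thmA}. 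Applying Theorem \ref{thm: sufcondomegdel} then produces the desired bi-$\omega$-homogeneous bijection, and therefore $X$ is $\omega$-rectifiable.

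The main obstacle is not the convergence of the series---this collapses at once once the uniform bound on $\epsilon_i^{1/d}$ is used---but rather arranging the bookkeeping so that the \emph{discrete} dyadic estimates on $E_\rho(2^{i-1})$ feed correctly into the \emph{continuous} requirement $\|F(x)-F(y)\|\leq CR\,\omega(\|x-y\|/R)$ that must hold for every $R>0$ and every $x,y\in B_R$. Interpolating across non-dyadic scales requires monotonicity of $\omega$ and of $t\mapsto\omega(t)/t$, both built into the definition of a modulus of continuity; and one must also check that the infinite composition $\cdots\circ(\mathrm{Id}+u_i)\circ\cdots$ converges to a genuine bijection rather than merely a continuous map. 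Both of these delicate points are presumably already encapsulated inside Theorem \ref{thm: sufcondomegdel}, so that Theorem A reduces cleanly to the arithmetic above.
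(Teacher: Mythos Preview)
Your reduction to Theorem~\ref{thm: sufcondomegdel} and the way you collapse the series using the membership $X\in\mathcal{D}_{\rho,d}$ is exactly the paper's argument. The one inaccuracy is the form of the criterion you quote: the numerator in \eqref{eq: omegadelone} is $(1+i)^{\alpha}\bigl(E_{X,\rho}(2^{i})-1\bigr)$ for some fixed $0<\alpha\le 1$, not $\epsilon_i^{1/d}$, and the standing hypothesis $E_{X,\rho}(2^{i-1})-1\lesssim 1/(1+i)$ is itself built into Theorem~\ref{thm: sufcondomegdel}. With the correct numerator the verification reads $(1+i)^{\alpha}\epsilon_i\lesssim (1+i)^{\alpha-1}\le 1$, which uses the full strength of the $\mathcal{D}_{\rho,d}$ bound rather than mere boundedness of $\epsilon_i^{1/d}$, and then the series reduces to \eqref{eq: thmA} exactly as you wrote.
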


Examples of moduli of continuity satisfying \eqref{eq: thmA} include the H\"{o}lder ones, and for fixed $d\geq 1$, the family of moduli $\omega_p(t):=t(\log(1/t))^p$, for $p>d$, that lie in between Lipschitz and H\"{o}lder. Observe that the class $\mathcal{M}_d$ satisfying \eqref{eq: thmA} contains neither the Lipschitz moduli of continuity nor those of the form $\omega_{\alpha}(t)=t(\log(1/t))^{\alpha}$, for $0<\alpha\leq d$. It should be noticed that Theorem \ref{thm: A} extends the result of McMullen \cite[Theorem 5.1]{McMu} to Delone sets in $\mathcal{D_{\rho,d}}$, by narrowing the class of moduli of continuity for which two Delone sets within this class could be distinguished. 
\medskip

 We are also interested in exploring the interaction between the order of a Delone set in terms of the geometric notion of repetitivity, and regularity in the sense of $\omega$-equivalence defined in previous paragraphs. For $r>0$ and $x\in X$, we call the set $X\cap B(x,r)$ a {\em $r$-patch of $X$}. A Delone set $X$ is \emph{repetitive} if for every $r>0$ there exists $R= R(r)>0$ such that every  $R$-patch of $X $ contains (the center of) translated copies of every $r$-patch of $X$; the smallest such $R$ is called the {\em repetitivity function} of $X$ and is denoted by $R_X$. In case of $R_X(r)\asymp r$, the Delone set is said to be {\em linearly repetitive}; in the next, the variables $t$ and $r$ will be used for moduli of continuity and repetitivity functions, respectively.
 \medskip
 



We consider repetitive functions based on moduli of continuity, extending the existing framework to explore the properties of Delone sets further:  
Given a modulus of continuity $\omega:(0,1)\to (0,\infty)$ we say that a Delone set $X\subset\R^d$ is {\em $\omega$-repetitive} if its repetitivity function satisfies
\[
R_X(r)\asymp r^2\omega\left(\frac{1}{r}\right)\footnote{Actually the $\omega$-repetitivity could be defined in terms of an increasing function $\phi(r)\gg r$, by letting $R_X(r)\asymp \phi(r)\omega\left(\frac{r}{\phi(r)}\right)$, but we do not see any potential application for this more general definition.};
\]

in particular, if $\omega(t)\asymp t$, then $\omega$-repetitivity coincides with linear repetitivity. 
The $\omega $-repetitivity extends the linearly repetitive property of $X$, by allowing the search radius 
 for repeated patches to grow according 
 to a modulus $\omega$, permitting the repetitive function $R_X(r)=r^2\omega(1/r)$ to grow faster than linear (whenever $\omega(t)\gg t$) to find copies of patches of smaller scale, such as,  when $\omega(t)\asymp t(\log(1/t))$. Observe that in order for a Delone set $X$ to be $\omega$-repetitive, it is necessary that its repetitive function $R_X$ satisfies that $R_X(r)/r^2$ is concave from a large-enough $r$, and $R_X(r)/r^2\longrightarrow 0$ when $r$ goes to $\infty$.
 
\medskip

In this work, for the first time, we connect repetitive Delone sets with $\omega$-rectifiability, for moduli of continuity $\omega$ being asymptotically greater than Lipschitz. In this way, we intend to initiate covering the gap that \cite[Theorem 2.1]{linrep} and \cite{irregsep} leave naturally open. More precisely, we address the following general question: {\em For which moduli of continuity $\omega$ there holds that every $\omega$-repetitive Delone set in $\R^d$ is $\omega$-rectifiable?} To tackle this, let us consider the moduli of continuity $\omega_p(t)= t(\log \frac{1}{t})^p, p>0$ 
(note that $ t \ll \omega_p(t)$ when $t\to 0$). 
In this case, the concept of $\omega_p$-repetitivity is defined by considering the repetitive function $R_X(r)= r(\log r)^{p}$. With this notation in mind, our second main result reads as follows.

\begin{thma}\label{thm: reprect}
For $0\leq p<1/d$, every $\omega_p$-repetitive Delone set in $\R^d$ is rectifiable.
 \end{thma}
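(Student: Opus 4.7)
The plan is to extend the Aliste-Coronel-Gambaudo argument \cite{linrep}, which covers the linearly repetitive case ($p=0$), to the full range $0\leq p<1/d$. The proof rests on two pillars: (i) a density deviation estimate tailored to the repetitivity function $R_X(r)\asymp r(\log r)^p$, which refines the Lagarias-Pleasants-type bound used in the linearly repetitive setting; and (ii) the higher-dimensional rectifiability criterion of Burago-Kleiner extended in \cite{linrep}, which converts a sufficiently fast decay of the density deviation into bi-Lipschitz equivalence with $\Z^d$.

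The first step is to establish an estimate of the form $E_\rho(k)-1\lesssim\varphi_p(k)$, for an explicit rate $\varphi_p$ that reduces to the $1/k$-bound in the linearly repetitive case and degrades gradually as $p$ increases toward $1/d$. The idea is to fix a cube of side $k$, choose a patch radius $r$ depending on $k$, partition the cube into subcubes of side comparable to $R_X(r)=r(\log r)^p$, and exploit that each such subcube contains translated copies of every $r$-patch. Counting patch occurrences in each subcube and tracking boundary and patch-complexity errors then produces $\varphi_p$. Feeding this bound into the rectifiability criterion and summing over dyadic scales, the resulting series converges precisely when $p<1/d$; the exponent $1/d$ enters through the $d$-dimensional scaling of the criterion (as is already visible in the formulation of Theorem \ref{thm: A}). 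This accounts for the threshold and dovetails with Theorem C, where the borderline value $p=1/d$ yields only the weaker $\omega$-rectifiability.

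The main obstacle I anticipate is the sharp calibration of the density deviation estimate: the logarithmic factor $(\log r)^p$ has to be tracked through the patch-counting argument with enough care that the exponent in $\varphi_p$ is optimal across the whole range $p<1/d$. In particular, the choice of $r$ as a function of $k$ must balance the boundary-layer contribution, which scales like $R_X(r)/k$, against the internal frequency-approximation error coming from the complexity of $r$-patches; any slack in either of these two estimates would translate into an unnecessarily restrictive upper bound on $p$ and fall short of matching the natural critical threshold $p=1/d$.
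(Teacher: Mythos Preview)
Your two-pillar strategy---a density deviation estimate followed by the rectifiability criterion of \cite{BurK2,linrep}---matches the paper's architecture. The divergence is in how the first pillar is obtained, and here your sketch has two issues worth flagging.

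First, the paper does not derive the density deviation bound from scratch; it invokes the Lagarias--Pleasants estimate directly (stated as Proposition~\ref{prop: replag}, \emph{cf.}\ \cite[Theorem~5.3]{repquasi}), which already covers $\omega_p$-repetitive sets and yields the multi-scale product bound
\[
E_{X,\rho}(2^j)-1\;\lesssim\;\prod_{k=2}^{m_j}\Bigl(1-\frac{c_1}{(\log U_k)^{pd}}\Bigr).
\]
Taking logarithms and summing $\sum k^{-pd}$ gives stretched-exponential decay $E_{X,\rho}(2^{j-1})-1\lesssim\exp\bigl(-c\,j^{1-pd}\bigr)$ for $p<1/d$, which is trivially summable. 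Your single-scale outline (fix $r=r(k)$, partition into $R_X(r)$-subcubes, count patch occurrences) is unlikely to produce a decaying $\varphi_p$ on its own: repetitivity guarantees that every $R_X(r)$-ball contains a copy of each $r$-patch, but says nothing about the \emph{number} of occurrences, so distinct subcubes may carry genuinely different densities. The ``internal frequency-approximation error'' you allude to is precisely what the multi-scale Lagarias--Pleasants iteration is needed to control; you would end up either citing their theorem (rendering the hand-computation redundant) or re-proving it.

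Second, your attribution of the threshold $p=1/d$ to ``the $d$-dimensional scaling of the criterion'' is off target. For bi-Lipschitz rectifiability the criterion is simply $\sum_i\bigl(E_{X,\rho}(2^i)-1\bigr)<\infty$, with no explicit dimension dependence; the $1/d$ exponent visible in Theorem~\ref{thm: A} pertains to $\omega$-rectifiability, which is not what is being proved here. The threshold instead enters through the Lagarias--Pleasants bound: the factor $(\log U_k)^{pd}$ reflects $d$-dimensional volume scaling in the patch-density comparison, and the sum $\sum k^{-pd}$ diverges precisely when $pd\geq 1$. At the borderline $p=1/d$ the paper obtains $E_{X,\rho}(2^{j-1})-1\lesssim 1/j$, which just fails summability and forces the retreat to $\omega_{1/d}$-rectifiability in Theorem~\ref{thm omegreprec}.
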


Theorem \ref{thm: reprect} points out that linear repetitivity is not the optimal condition to ensure rectifiability, answering a question posed by Cortez and Navas in \cite{CN}, and generalizing additionally \cite[Theorem 2.1]{linrep}. Note that Theorem \ref{thm: reprect} not only include the moduli of continuity $\omega_p$, but also those satisfying that $\omega\ll \omega_p$ for some $0< p< 1/d$, for instance those of the form $\omega(t)=t\left(\log\log\left(\frac{1}{t}\right)\right)^{\gamma}$, where $\gamma\geq 0$. Thus, Theorem \ref{thm: reprect} is interesting since it establishes a threshold for the class of moduli of continuity where $\omega$-repetitivity implies rectifiability. A similar result was recently obtained in \cite{almostlin} where linear repetitivity is replaced by $\varepsilon$-linear repetitivity, which describes sets in which small-scale patterns are found within larger regions, with a margin of error controlled by a sufficiently small $\varepsilon$.; however, this notion is not directly related to our notion of $\omega$-repetitivity.
\medskip

It is worth mentioning that Theorem \ref{thm: reprect} also can be rewritten in terms of the repetitivity function as follows: {\em for any $0\leq p<1/d$, every $r(\log r)^p$-repetitive Delone set in $\R^d$ (i.e., with repetitive function $R_X(r)=r(\log r)^p$), is rectifiable}; the reason we state Theorem \ref{thm: reprect} as we do, is because of its connection with further results that relate $\omega$-repetitivity and $\omega$-rectifiability, and that we present below.
\medskip

Although in the conclusion of Theorem \ref{thm: reprect} the rectifiability is not achieved in the case $p=\frac{1}{d}$, we still are able to show that $\omega_{1/d}$-repetitive Delone sets satisfying the density deviation assumption \eqref{eq: mindensdev}, are equivalent to $\Z^d$ under bi-$\omega_{\frac{1}{d}}$ homogeneous bijections, as in the spirit of \cite[Theorem 2.1]{linrep}.

\begin{thma}\label{thm omegreprec}
   Every $\omega_{1/d}$-repetitive Delone set in $\mathcal{D}_{\rho,d}$ is $\omega_{1/d}$-rectifiable.
\end{thma}
The proofs of Theorems \ref{thm: reprect} and \ref{thm omegreprec} rely on a Theorem of Lagarias and Pleasants in \cite{repquasi} on the estimation of the density deviation for patches in repetitive Delone sets; this estimation differs for $\omega_p$-repetitive Delone sets in the cases  $ 0 \leq p <1/d$ and  $p =1/d$. In view of Theorem \ref{thm omegreprec}, it is natural to ask the cut-off point for the class of moduli of continuity where the property ``$\omega$-repetitivity implies $\omega$-rectifiability" occurs (without considering the cases when $p>d$ since the $\omega_p$-rectifiability is ensured from Theorem \ref{thm: A}, regardless of any repetitive behavior of the Delone set). We think that every $\omega$-repetitive Delone set in $\R^d$ is $\omega$-rectifiable, but this requires a more detailed study of the estimation of the density deviation for different moduli of continuity, as in \cite{repquasi}, remaining open for analysis, for instance, the moduli $\omega_p$ in the case $1/d<p\leq d$.
\medskip

Lastly, we also address a connected problem corresponding to finding solutions for the {\em prescribed volume for equation}:
\begin{equation}\label{eq: jaceqintro}
    \left\{\begin{array}{rcl}
    \forall E\text{  open  set in }  [0,1]^d,& \displaystyle\int_E f(x)dx=\mathsf{Vol}(\Phi(E)),\\
    \Phi(x)=x,& \text{on }\partial [0,1]^d,
\end{array}\right.
\end{equation}

with a certain gain of regularity, where $f$ is a non-negative integrable function with {\em total mass $1$}\footnote{For us, a non-negative integrable function $f:[0,1]^d\to\R$ with {\em total mass $1$} means that $\int_{[0,1]^d}f(x)dx=1$.}. Given a Delone set $X$ in $\R^d$, it is well-known that bi-Lipschitz solutions for (the non-compact version of) \eqref{eq: jaceqintro}, for a suitable function $f=f_X\in L^{\infty}(\R^d)$ will produce a bi-Lipschitz bijection from $X$ to $\Z^d$; see for instance \cite{linrep, BurK2}, and \cite{sol} in the case of tilings of the plane. 
\medskip

In \cite[Theorem 2]{RivYe}, the authors prove that for any $L^{\infty}$-density function (see \eqref{eq: densfunc} below), the equation \eqref{eq: jaceqintro} has bi-Hölder solutions; moreover, they also establish sufficient conditions for a density function $f$ such that the equation \eqref{eq: jaceqintro} has bi-Lipschitz solutions, by means of suitably controlling the oscillations of $f$.  In this work, we extend this result in the sense that, for any density function $f\in L^{\infty}([0,1]^d)$, we can find a solution for the prescribed volume form equation with more regularity than bi-Hölder.

\begin{thma}\label{thm: riveyeomegaintro}
Let $\omega:(0,1)\to (0,\infty)$ be a modulus of continuity and assume there is an increasing positive function $\varphi:(0,\infty)\to (0,\infty)$ such that
\begin{equation}\label{eq: osc1}
    \int_0^{1/2}\frac{\varphi(t)}{\omega(t)}dt<\infty\quad\text{and}\quad \sum_{k\geq 1}\varphi(2^{-k})<\infty.
\end{equation}
If $f$ is a density function verifying the following control over its oscillations:
\begin{equation}\label{eq: oscD}
    (\forall x\in [0,1]^d,\forall t>0),\quad\fint_{[0,1]^d\cap B(x,t)}\left|f(y)-\fint_{[0,1]^d\cap B(x,t)} f(z) dz
\right|dy\leq\varphi(t),
\end{equation}
then there exists a bi-$\omega$-mapping $\Phi:[0,1]^d\to [0,1]^d$ satisfying \eqref{eq: jaceqintro}.
\end{thma}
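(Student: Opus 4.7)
\smallskip

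\textbf{Proof plan for Theorem \ref{thm: riveyeomegaintro}.} The plan is to follow the iterative dyadic scheme of Rivi\`ere--Ye, replacing the geometric--H\"older bookkeeping by a sharper one driven by the oscillation control $\varphi$ and the summability conditions in \eqref{eq: osc1}.

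First, I would build a dyadic discretization of $f$: for each $k\geq 0$, let $\mathcal{Q}_k$ be the partition of $[0,1]^d$ into closed dyadic cubes of sidelength $2^{-k}$, and let $f_k$ be the piecewise constant function equal, on each $Q\in\mathcal{Q}_k$, to $\fint_Q f$. The hypothesis \eqref{eq: oscD} translates into the bound $\|f_{k+1}-f_k\|_{L^1(Q)}\lesssim\varphi(2^{-k})\,\mathrm{Vol}(Q)$ on every $Q\in\mathcal{Q}_k$, which in turn says that the mass redistribution between the $2^d$ children of $Q$ needed to pass from $f_k$ to $f_{k+1}$ is at most of relative order $\varphi(2^{-k})$. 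The first technical step is then, at each level $k$, to exhibit a bijection $\Psi_k:[0,1]^d\to[0,1]^d$, fixing $\partial[0,1]^d$ and each face of every cube of $\mathcal{Q}_k$, such that $\Psi_k$ transports $f_{k+1}\,dx$ restricted to each $Q\in\mathcal{Q}_k$ to $f_k\,dx$ on the same $Q$. This can be done cube by cube by the elementary piecewise-affine ``sliding faces'' construction used in \cite{RivYe} and \cite{linrep}, giving
\[
\|\Psi_k-\mathrm{id}\|_{\infty}\lesssim 2^{-k}\varphi(2^{-k}),\qquad \mathrm{Lip}(\Psi_k),\,\mathrm{Lip}(\Psi_k^{-1})\leq 1+C\varphi(2^{-k}).
\]

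Next, I would define the partial compositions $\Phi_N:=\Psi_0\circ\Psi_1\circ\cdots\circ\Psi_N$. The crucial uniform Lipschitz bound $\mathrm{Lip}(\Phi_N)\leq\prod_{k=0}^{N}(1+C\varphi(2^{-k}))\leq\exp\!\big(C\sum_{k\geq 0}\varphi(2^{-k})\big)<\infty$ follows from \eqref{eq: osc1}. Using $\|\Phi_{N+1}-\Phi_N\|_\infty\lesssim\|\Psi_{N+1}-\mathrm{id}\|_\infty\lesssim 2^{-N}\varphi(2^{-N})$ and summability of $\varphi(2^{-k})$, the sequence $\{\Phi_N\}$ is Cauchy in the uniform norm and converges to a continuous $\Phi$, which is the identity on $\partial[0,1]^d$ and which, by passing to the limit in $\int_E f_{N+1}\,dx=\mathrm{Vol}(\Phi_N(E))$, solves \eqref{eq: jaceqintro}. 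The same argument applied to $\Phi_N^{-1}$ (using the uniform bound on $\mathrm{Lip}(\Psi_k^{-1})$) gives a continuous inverse.

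The main obstacle, and the step where \eqref{eq: osc1} really enters, is the modulus-of-continuity estimate. For $x,y\in[0,1]^d$ with $\delta:=\|x-y\|$, pick $K$ so that $2^{-K-1}<\delta\leq 2^{-K}$ and split
\[
\Phi(x)-\Phi(y)=\bigl(\Phi_K(x)-\Phi_K(y)\bigr)+\sum_{k\geq K}\bigl((\Phi_{k+1}-\Phi_k)(x)-(\Phi_{k+1}-\Phi_k)(y)\bigr).
\]
The first term is bounded by $\mathrm{Lip}(\Phi_K)\,\delta\lesssim \delta\leq \omega(\delta)$. Each summand in the tail is bounded by $2\|\Phi_{k+1}-\Phi_k\|_\infty\lesssim 2^{-k}\varphi(2^{-k})$, so the tail is controlled by $\sum_{k\geq K}2^{-k}\varphi(2^{-k})\asymp \int_0^{2^{-K}}\varphi(t)\,dt$. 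Finally, the monotonicity of $\omega$ yields
\[
\int_0^{2^{-K}}\varphi(t)\,dt\leq \omega(2^{-K})\int_0^{2^{-K}}\frac{\varphi(t)}{\omega(t)}\,dt\leq \omega(2^{-K})\int_0^{1/2}\frac{\varphi(t)}{\omega(t)}\,dt\lesssim \omega(\delta),
\]
proving that $\Phi$ is $\omega$-continuous. The exact same splitting applied to $\Phi^{-1}$, using the uniform Lipschitz control of $\Phi_N^{-1}$ and the bound $\|\Phi_{N+1}^{-1}-\Phi_N^{-1}\|_\infty\lesssim 2^{-N}\varphi(2^{-N})$, yields the $\omega$-continuity of $\Phi^{-1}$, completing the proof that $\Phi$ is a bi-$\omega$ map. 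The delicate point one must check carefully is that the faces fixed at level $k$ are still fixed at subsequent levels, so that the convergence and inverse estimates are not destroyed by cumulative distortion; this is the reason to align $\Psi_k$ with the dyadic grid $\mathcal{Q}_k$.
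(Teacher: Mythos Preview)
Your proposal is correct and follows essentially the same approach as the paper: the Rivi\`ere--Ye dyadic iteration, translating the oscillation hypothesis \eqref{eq: oscD} into the increment bound $\|\nabla(\Psi_k-\mathrm{Id})\|_\infty\lesssim\varphi(2^{-k})$, then splitting $\Phi(x)-\Phi(y)$ at the scale $K$ with $2^{-K}\asymp\|x-y\|$ into a Lipschitz piece and a tail controlled via $\int_0^{1/2}\varphi/\omega<\infty$. The only notable bookkeeping difference is that you use the sharper displacement bound $\|\Psi_k-\mathrm{id}\|_\infty\lesssim 2^{-k}\varphi(2^{-k})$ to estimate the tail directly as $\sum_{k\ge K}2^{-k}\varphi(2^{-k})\asymp\int_0^{2^{-K}}\varphi\le\omega(2^{-K})\int_0^{1/2}\varphi/\omega$, whereas the paper uses the cruder bound $\|\Phi_{\overline n,l}-\mathrm{Id}\|_\infty\le\sqrt d\,2^{-l}$ together with an auxiliary product-to-sum inequality (their Claim~\ref{cl: ineq}) and the concavity estimate $\sum_{k\le i}\varphi(2^{-k})\lesssim 2^{i}\omega(2^{-i})$; your route is marginally cleaner but the two are equivalent in spirit.
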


The condition \eqref{eq: oscD} has the same flavor that \eqref{eq: thmA} and it is satisfied, for instance, for Hölder moduli of continuity, but also for those of the form $\omega_{1,p}(t)\asymp t(\log (1/t))^p,\ \omega_{2,p}\asymp t\log(1/t)(\log\log (1/t))^p$, and so on, with $p>1$, and every increasing function $\varphi:(0,\infty)\to(0,\infty)$ with $\sum_{k\geq 1}\varphi(2^{-k})<\infty$. It would be interesting to know the exact class of moduli of continuity for which we can always find solutions for \eqref{eq: jaceqintro}; in view of this discussion, a plausible conjecture is that bi-$\omega$-regular solutions for \eqref{eq: jaceqintro} exist for $t\log(1/t)\lesssim\omega(t)$.

\subsection{Notations and conventions}\label{ssec: notations}

Throughout this article, the notation 
$ f(t)\ll g(t)$ means that   $ \frac{f(t)}{g(t)}\to 0, $ when $t\to 0$. The notation
$f\lesssim g$ means that there exists a constant $C>0$ such that  $f\leq C g. $ The notation  $f\asymp g$ means $f\lesssim g\lesssim f.$ Henceforth, by $\log$ we mean $\log_2$.
\medskip

An increasing function $\omega: (0,a)\to (0,\infty)$ is called a {\em modulus of continuity} if it is concave, and $\lim_{t\to 0}\omega(t)=0$. Given two moduli of continuity $\omega_1,\omega_2: (0,1)\to (0,\infty)$, we say that {\em $\omega_2$ is asymptotically larger than $\omega_1$} when $\omega_1(t)\ll\omega_2(t)$, when $t\to 0^+$. 
A modulus of continuity $\omega $  is said to be asymptotically between Lipschitz and H\"older if, when  $t\to 0$, it satisfies  $t\ll\omega (t) \ll t^\alpha, $ for all $\alpha \in (0,1)$. Some examples of moduli of continuity that lie asymptotically between  Lipschitz  and  H\"older  are; for $\gamma>1$: 
$$ t (\log(1/t))^{1/\gamma}\ll t(\log(1/t))\ll t(\log(1/t))(\log \log (1/t))\ll t\log (1/t))^{\gamma}. 
 $$ 

\medskip

Given a modulus of continuity $\omega:(0,a)\to (0,\infty)$, we say that a map $\Phi:[0,1]^d\to\R^d$ is a {\em $\omega$-mapping} (or that it is {\em $\omega$-regular}) if there is some positive constant $C_{\omega}$ such that:
\begin{equation}\label{eq: omegamap}
(\forall x,y\in[0,1]^d\text{ with }\|x-y\|<a),\qquad    \|\Phi(x)-\Phi(y)\|\leq C_{\omega}\cdot\omega(\|x-y\|).
\end{equation}
An homeomorphism $\Phi:[0,1]^d\to\R^d$ is called a {\em bi-$\omega$-mapping} (or bi-$\omega$-regular) if both $\Phi$ and $\Phi^{-1}$ are $\omega$-mappings. Since, for our purposes, the exact value of $a>0$ is irrelevant, we only consider the case $a=1$ from now on.
\medskip

For a {\em density function}, we mean a non-negative integrable function. As customary, we use the notation
 \[
 \fint_A f(x)dx:=\frac{1}{\Vol(A)}\int_A f(x)dx,\qquad\text{where }A\subset\R^d\text{ is a measurable set,}
 \]
where $\Vol$ stands for the Lebesgue measure in $\R^d$. All the squares we consider have sides parallel to the coordinate axes by default. 
\medskip

Given a finite set $F$, we denote its number of points by $|F|$.

\section{A sufficient condition for a Delone set to be $\omega$-rectifiable}\label{sec: rectDel}

Given a modulus of continuity $\omega:(0,1)\to (0,\infty)$, we establish a criterion ensuring the $\omega$-rectifiability of a Delone set provided a strong control over the oscillations of the density of the points in the patches of a Delone set is satisfied. More precisely, if the densities of points in $2^{i}$-patches of a Delone set converge rapidly enough to a limit density, where the density deviation is controlled by a modulus of continuity $\omega$, then the Delone set must be $\omega$-rectifiable. The conditions that we find here, though weaker than the corresponding ones in \cite[Theorem 3.1]{linrep}, still ensure regularity for a Delone set, which actually turns out to be stronger than the minimal expected H\"{o}lder-type regularity obtained in \cite[Theorem 5.1]{McMu}. 

\begin{thm}\label{thm: sufcondomegdel}
    Let $X\subset\R^d$ be a Delone set and $\omega:(0,1)\to\ (0,\infty)$ be a modulus of continuity such that $t\lesssim\omega(t)$. For $\rho>0$, consider the density deviation 
    \begin{equation}
        e_{X,\rho}(A):=\max\left\{\frac{\rho\cdot\Vol(A)}{|A\cap X|},\frac{|A\cap X|}{\rho\cdot\Vol(A)}\right\},\quad\text{for }A\subset\R^d\text{ bounded measurable}.
    \end{equation}
    For each $k\in\N$, define $E_{X,\rho}(k)$ be the supremum of the $e_{X,\rho}(C)$, where the supremum is taken over all the $d$-dimensional integer hypercubes $C$ with sidelength $k$. Assume that $X$ verifies 
    \[
    E_{X,\rho}(2^{i-1})-1\lesssim 1/(1+i).
    \]
    If there exists $\rho>0$ such that 
    \begin{equation}\label{eq: omegadelone}
        \sum_{i=1}^{\infty}(1+i)^{\alpha}\left(\frac{E_{X,\rho}(2^i)-1}{(2^{i-1}\omega(2^{-(i-1)}))^{1/d}}\right)<\infty,
    \end{equation}
    for some $0<\alpha\leq 1$, then $X$ is $\omega$-rectifiable.
\end{thm}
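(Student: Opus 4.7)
The plan is to adapt the multi-scale Burago-Kleiner-type construction employed by Aliste, Coronel and Gambaudo in \cite{linrep} so that the resulting bijection $F : X \to \frac{1}{\rho^{1/d}}\Z^d$ (which is bi-Lipschitz equivalent to $\Z^d$) acquires the stronger bi-$\omega$-homogeneous regularity prescribed by \eqref{eq: omegadelone}. After a harmless rescaling we may assume $\rho = 1$. Let $\mathcal{P}_i$ denote the partition of $\R^d$ into closed integer hypercubes of sidelength $2^i$. I would build $F$ inductively: at stage $i+1$ one combines $2^d$ previously constructed bijections on cubes in $\mathcal{P}_i$ into a bijection on the ambient cube of $\mathcal{P}_{i+1}$, inserting a discrete \emph{transport} correction that transfers points between sub-cubes so as to balance the cardinality of $X$ against the target lattice in each sub-cube.

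The central quantitative input is a per-scale displacement estimate. The number of points that must be moved between sub-cubes of side $2^i$ inside a cube of $\mathcal{P}_{i+1}$ is at most $C(E_{X,\rho}(2^{i+1})-1)\cdot 2^{(i+1)d}$, and, by a volumetric transport argument, each such move can be realized along a path of length $\lesssim 2^i(E_{X,\rho}(2^i)-1)^{1/d}$. The $1/d$-exponent reflects the geometric fact that redistributing $(E-1)\cdot 2^{id}$ excess points evenly across a cube of volume $2^{id}$ requires a displacement of order $2^i(E-1)^{1/d}$ per transferred point. The weight $(1+i)^\alpha$ appearing in \eqref{eq: omegadelone} is designed to absorb the combinatorial overhead coming from the fact that a fixed point may be affected by transport corrections in several nearby cubes at neighboring scales.

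For $x,y\in X$ lying in a common cube of $\mathcal{P}_j$, so in particular $\|x-y\|\lesssim 2^j$, the telescoping identity combined with the per-scale estimate yields
\[
\|F(x)-F(y)\|\;\leq\;\|x-y\|\;+\;C\sum_{i\geq j}(1+i)^{\alpha}\,2^i\bigl(E_{X,\rho}(2^i)-1\bigr)^{1/d}.
\]
Using the preliminary bound $E_{X,\rho}(2^{i-1})-1\lesssim 1/(1+i)$ to compare $(E-1)^{1/d}$ with $(E-1)$ up to a factor absorbed by $(1+i)^\alpha$, and rearranging the remaining sum so as to match \eqref{eq: omegadelone}, I would obtain $\|F(x)-F(y)\|\lesssim 2^j\,\omega(\|x-y\|/2^j)$. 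Taking $R\asymp 2^j$ and invoking $t\lesssim \omega(t)$ to absorb the identity term $\|x-y\|$ into $R\omega(\|x-y\|/R)$ then gives the $\omega$-homogeneity of $F$; the same argument applied to $F^{-1}$ yields bi-$\omega$-homogeneity, hence $\omega$-rectifiability of $X$.

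The main obstacle is realizing the discrete transport step at each stage with the claimed per-point displacement bound $\lesssim 2^i(E-1)^{1/d}$ in a way that remains compatible with bijections already constructed at earlier scales. This inductive compatibility is the heart of the Burago-Kleiner construction, and adapting it with $\omega$-sensitivity, rather than only bi-Lipschitz control, requires careful accounting of how the corrections at consecutive scales combine and of the polynomial losses incurred in passing from $(E-1)$ to $(E-1)^{1/d}$, which is precisely what the weight $(1+i)^\alpha$ in \eqref{eq: omegadelone} is chosen to absorb.
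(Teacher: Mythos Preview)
Your approach diverges from the paper's in an essential way, and the divergence hides a genuine gap. The paper does \emph{not} build the bijection $X\to\Z^d$ by a direct discrete transport; it first reduces to a continuous statement (Proposition~\ref{prop: omegaplane}) about solving $\int_A f=\Vol(\Phi(A))$ with a bi-$\omega$-homogeneous $\Phi:\R^d\to\R^d$, where $f$ is the step density associated to $X$. This $\Phi$ is produced by the Rivi\`ere--Ye scheme: at each dyadic level $i$ inside a cube of side $2^m$ one composes $d$ elementary bi-Lipschitz maps (one per coordinate), each satisfying $\|\nabla(\Psi-\mathsf{Id})\|_\infty\lesssim E(2^{i-1})-1$. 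The resulting bound on $u_m$ is therefore \emph{multiplicative}, $\|\nabla u_m^{\pm1}\|_\infty\le\prod_i\bigl(1+C(E(2^{i-1})-1)\bigr)^d$. The weight $(1+i)^\alpha$ in \eqref{eq: omegadelone} does not come from combinatorial overhead of repeated transports; it comes from the elementary Claim~\ref{lemma: clA5}, which converts $\prod(1+a_i)$ into $1+C_\alpha\sum(1+i)^\alpha a_i$ under the standing hypothesis $a_i\lesssim 1/(1+i)$. Likewise, the exponent $1/d$ in the denominator of \eqref{eq: omegadelone} is not a transport estimate: it arises purely algebraically in Lemma~\ref{lem: omegalocalplane}, where the $d$-th power on the sum (from the $d$-fold composition per level) is undone by inserting $(2^{i-1}\omega(2^{-(i-1)}))^{1/d}$ inside the sum and using the concavity and monotonicity of $\omega$. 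The passage back to the Delone set is then done via Hall's marriage theorem as in \cite{BurK2,McMu}.

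The concrete gap in your proposal is the per-scale displacement bound $\lesssim 2^i(E-1)^{1/d}$. No such estimate is available: the natural displacement of the level-$i$ correction, whether in the continuous construction or in any discrete analogue, is $\lesssim 2^i(E-1)$, with no $1/d$-root. Your volumetric heuristic (that evenly redistributing a fraction $E-1$ of the points costs only $2^i(E-1)^{1/d}$ per point) is not valid, since individual points may have to cross the whole cube. Consequently the telescoping sum $\sum_{i\ge j}(1+i)^\alpha\,2^i(E-1)^{1/d}$ neither arises from the construction nor matches \eqref{eq: omegadelone}, and your proposed conversion of $(E-1)^{1/d}$ to $(E-1)$ ``up to a factor absorbed by $(1+i)^\alpha$'' goes the wrong direction: since $E-1\lesssim 1/(1+i)$ one has $(E-1)^{1/d}\gtrsim(1+i)^{(d-1)/d}(E-1)$, so the root makes the term \emph{larger}, requiring a stronger hypothesis than \eqref{eq: omegadelone}, not a weaker one. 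Finally, an additive telescoping bound on $\|F(x)-F(y)\|$ does not by itself yield a bi-$\omega$-homogeneous estimate of the form $R\,\omega(\|x-y\|/R)$; in the paper this comes out of the multiplicative Lipschitz control combined with the concavity argument in Lemma~\ref{lem: omegalocalplane}, which your outline does not supply.
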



Observe that Theorem \ref{thm: sufcondomegdel} generalizes \cite[Theorem 1.3]{BurK2} by encompassing a broader class of density oscillations for a Delone set. In particular, if $\omega(t)\asymp t$, then \eqref{eq: omegadelone} yields that:
    \[
   \sum_{i\geq 1}\left(E_{X,\rho}(2^i)-1\right)<\infty,
    \]
    implying the sufficient condition that appears in \cite[Theorem 1.3]{BurK2}.
\medskip

As we mentioned in the Introduction, Theorems \ref{thm: A}, \ref{thm: reprect} and \ref{thm omegreprec} are consequences of Theorem \ref{thm: sufcondomegdel}. We start by proving these Theorems and in Section \ref{ssec: prrofthmsuf} below we prove Theorem \ref{thm: sufcondomegdel}. We recall to the reader that
\begin{equation}\label{eq: mindensdevpr}
    X\in\mathcal{D}_{\rho,d}\quad\Leftrightarrow\quad E_{X,\rho}(2^{j-1})-1\lesssim\frac{1}{1+j}.
\end{equation}

\subsection{Proof of Theorem \ref{thm: A}}\label{sec: Theorem A}
Before proving Theorem \ref{thm: A}, we first observe that, although we are requiring the condition $X\in\mathcal{D}_{\rho,d}$, it can be shown that the density deviation $E_{\rho}(2^{j})$ is bounded for any Delone set. 
    
\begin{lemma}\label{claim: boundev}
    Under the notations of Section \ref{sec: rectDel}, if $X\subset\R^d$ is a Delone set, then the sequence $(E_{X,\rho}(2^i))_{i\geq 1}$ is bounded.
\end{lemma}

\begin{proof}
    Otherwise, there must exists a sequence of cubes $(B_i)_{i\geq 1}$, where $B_i$ has sidelength $2^i$ for each $i\geq 1$, and such that one of the sequences 
    \[
    a_i:=\frac{|X\cap B_i|}{\rho \Vol(B_i)},\qquad b_i:=\frac{\rho \Vol(B_i)}{|X\cap B_i|}\qquad\text{where }i\geq 1
    \]
   
    is unbounded. If $(a_i)_{i\geq 1}$ is unbounded, then $X$ is not uniformly discrete, and if $(b_i)_{i\geq 1}$ is unbounded, then $X$ cannot be coarsely dense. Therefore $(E_{X,\rho}(2^i))_{i\geq 1}$ must be bounded.
\end{proof}

\begin{proof}[Proof of Theorem \ref{thm: A}]
    Let $X\in\mathcal{D}_{\rho,d}$ and $\omega\in\mathcal{M}_d$ like in Theorem \ref{thm: A}. Then we have that $0\leq E_{X,\rho}(2^{i-1})-1\lesssim \frac{1}{i+1}$, and thus, for any $\alpha\in (0,1)$ be fixed we have:
    \[
    \sum_{i\geq 2}\frac{(i+1)^{\alpha}\left(E_{X,\rho}(2^{i-1})-1\right)}{\left(2^{i-1}\omega\left(\frac{1}{2^{i-1}}\right)\right)^{1/d}}\lesssim\sum_{i\geq 2}\frac{1}{\left(2^{i-1}\omega\left(\frac{1}{2^{i-1}}\right)\right)^{1/d}}<\infty,
    \]
    where in the last step we use that $\frac{1}{(1+i)^{\beta}}\leq 1$, for every fixed $0<\beta\leq 1$ and any $i\geq 2$. Then, the conclusion of Theorem \ref{thm: A} follows from Theorem \ref{thm: sufcondomegdel}.
\end{proof}

\subsection{On repetitiveness and rectifiability: Proof of Theorems \ref{thm: reprect} and \ref{thm omegreprec}}\label{ssec: omegarec}

This Section is devoted to proving  Theorems \ref{thm: reprect} and \ref{thm omegreprec}. The proofs rely on a theorem of Lagarias and Pleasants concerning the estimation of the density deviation for Delone sets in $\R^d$ that are $\omega_p$-repetitive, where $\omega_p(t)=t(\log(1/t))^{p}$; concretely, in our notations, Lagarias-Pleasants' Theorem can be restated as follows.

\begin{prop}[{\em Cf.}  Theorem 5.3 in \cite{repquasi}]\label{prop: replag}
    If $X$ is a $\omega_p$-repetitive Delone set in $\R^d$, then for every $j\geq 1$,
    \begin{equation}\label{eq: replag}
    E_{X,\rho}(2^j)-1\lesssim \prod_{k=2}^{m_j}\left(1-\frac{c_1}{(\log U_k)^{p d}}\right),
    \end{equation}
    where $U_{k+1}:=CU_k(\log(U_k))^p,\ C=4(2\sqrt{d}+1),\ U_1\geq C$, $0<c_1<1$ are universal constants, and $m_j:=\max\{m: U_m\leq 2^j\}$. 
\end{prop}
The following result establishes an asymptotic control for $m_j$; this control is key for estimating the product in \eqref{eq: replag}, and in consequence \eqref{eq: omegadelone}, since:
\[
\prod_{k=2}^{N}\left(1-\frac{c_1}{(\log U_k)^{p d}}\right)\leq\prod_{k=2}^{M}\left(1-\frac{c_1}{(\log U_k)^{p d}}\right)\quad\text{provided }M\leq N.
\]
In the next, recall that $\log$ stands for $\log_2$.

\begin{lemma}\label{cl: omegalag}
    Under the notations of Proposition \ref{prop: replag}, we have that 
    \begin{equation}\label{eq: estmj}
    m_j\asymp \frac{j}{\log j}.
    \end{equation}
\end{lemma}
\begin{proof}
Let $j_0\in\N$ be the smallest positive integer such that $2^{j_0}<U_1<2^{j_0+1}$. By definition of $U_2$, we have that
\[
C2^{j_0}j_0^p<U_2<C2^{j_0+1}(j_0+1)^p\quad\Leftrightarrow\quad 2^{j_0+p\log j_0+\log C}<U_2<2^{j_0+1+p\log(j_0+1)+\log C}.
\]

Similarly, by the definition of $U_3$ we get that
\[
C2^{j_0+p\log j_0+\log C}(j_0+p\log j_0+\log C)^p<U_3<C2^{j_0+1+p\log (j_0+1)+\log C}(j_0+p\log j_0+\log C)^p,
\]
or equivalently,
\[
2^{j_0+p\log j_0+p(\log(j_0+p\log j_0+\log C)+2\log C}<U_3< 2^{j_0+1+p\log(j_0+1)+p\log(j_0+1+p\log(j_0+1)+\log C))+2\log C}.
\]

By continuing this process, there holds that $m_j$ must be the largest positive integer such that 
\begin{equation}\label{eq: estkjmj}
    m_j\log C+M_{m_j,j_0}<j<m_j\log C+M_{{m_j,j_0+1}},
\end{equation}

where $M_{N,k_0},\ k_0\in\{j_0,j_0+1\}$ is defined recursively by
\[
M_{N,k_0}:=\left\{\begin{array}{ccc}
    k_0 & \text{if} & N=0\\
     M_{N-1,k_0}+p\log(M_{N-1,k_0}+(N-1)\log C) & \text{if} & N\geq 1
\end{array}\right.
\]

 In this way, we need to estimate $m_j$ and $M_{m_j,j_0}$. The main asymptotic estimate we obtain is that $M_{N,j_0}\asymp N\log N$, which we proceed to prove.
\medskip

\noindent\fbox{{\bf $M_{N,j_0}\gtrsim N\log N$.}} By definition of $M_{N,j_0}$ we have that
\[
M_{N,j_0}-M_{N-1,j_0}=p\log (M_{N-1,j_0}+(N-1)\log C)>p\log ((N-1)\log C)=p\log (N-1)+p\log C.
\]
By summing and telescoping the above, and from the fact that $\sum_{k=1}^N\log k\geq\int_{1}^N\log xdx$, we obtain that
\begin{equation*}
\begin{split}
M_{N,j_0}-j_0>p\sum_{k=1}^N \log k+pN\log C & >p (N\log N-N+1)+pN\log C\\
& >(N\log N)\left(p-\frac{N+1}{N\log N}+\frac{p}{\log N}\right),
\end{split}
\end{equation*}
where we use $C>2$. Now, since $\left(p-\frac{N+1}{N\log N}+\frac{p}{\log N}\right)\to p$, we get that
\[
M_{N,j_0}\gtrsim N\log N,
\]
as claimed.
\medskip

\noindent{\fbox{$M_{N,j_0}\lesssim N\log (N+1)$}}. Let 
\begin{equation*}
    N_0=\max\{2,j_0\}\quad\text{and}\quad C_1=\max\left\{j_0,2p,\frac{\log C}{2},\frac{M_{N_0,j_0}}{2N_0\log (N_0+1)}\right\}.
\end{equation*}
We shall prove that 
\[
(\forall N\geq N_0),\quad M_{N,j_0}\leq 2C_1N\log (N+1).
\]
Firstly, by definition of $C_1\geq j_0$ we have that: 
\[
M_{N_0,j_0}\leq 2C_1N_0\log (N_0+1.)
\]

For the inductive step, if $M_{N-1,j_0}\leq 2C_1N\log(N+1)$, then by definition of $M_{N,j_0}$ the following holds for $N\geq N_0$:
\begin{equation*}
    \begin{split}
        M_{N,j_0} & = M_{N-1,j_0}+p\log\left(M_{N-1,j_0}+(N-1)\log C\right)\\ 
                  & \leq2C_1N\log (N+1)+p\log(2C_1N\log (N+1)+(N-1)\log C)\\
                  & \leq 2C_1N\log (N+1)+p\log (2N(C_1\log (N+1)+1))\\
                  & =2C_1N\log (N+1)+p\log (N+1)+p\log (2(\log (N+1)+1))\\     
                  & \leq 2C_1N\log (N+1)+4p\log (N+2)\leq 2C_1(N+1)\log (N+2), 
    \end{split}
\end{equation*}

where in the last step we use that $C_1\geq 2p$. Therefore, the claim follows by induction over $N\geq N_0$. 
\medskip

From \eqref{eq: estkjmj} and since $M_{N,j_0}\asymp N\log N$, we get that
\[
m_j\log C+m_j\log m_j\asymp j\quad\Leftrightarrow\quad m_j\asymp \frac{j}{\log m_j}\quad\text{(in particular }j\gg m_j).
\]

By continuing the above asymptotic estimations, we get that 
\begin{equation*}
    \begin{split}
        m_j \asymp \frac{j}{\log\left(\frac{j}{\log m_j}\right)}=\frac{j}{\log j-\log\log m_j}\asymp \frac{j}{\log j},
    \end{split}
\end{equation*}

which finishes the proof of the Lemma.
\end{proof}

Also, we need the following claim to estimate the right-hand side in \eqref{eq: replag}, and its proof follows by induction.

\begin{claim}[{\em Cf.}  page 853 in \cite{repquasi}]\label{cl: omegalag2}
   Under the notations of Proposition \ref{prop: replag} it holds that:
   \[
   \log U_{j}<U_1j\log(U_1j)\log\log (U_1j).
   \]
\end{claim}

Now, we are in a position to prove Theorems \ref{thm: reprect} and \ref{thm omegreprec}. In what follows, since we shall take logarithms and exponentials to obtain estimates for the density deviation $E_{X,\rho}(2^{j-1})-1$, we denote by $C$ any positive upper multiplicative bound that does not depend on $j$.

\begin{proof}[Proof of Theorem \ref{thm: reprect}]
    From Proposition \ref{prop: replag} and Lemma \ref{cl: omegalag2}, the following holds for some positive constant $C$:

    \begin{equation}\label{eq: proof1.2.1}
        E_{X,\rho}(2^{j-1})-1\leq C\prod_{k=2}^{m_j}\left(1-\frac{c_1}{(U_1 k \log (U_1k)\log\log (U_1k))^{pd}}\right).
    \end{equation}

After applying $\log$ in both sides of \eqref{eq: proof1.2.1} and the Taylor expansion of $\log(1-x)$, we have that
\begin{equation*}\label{eq: proof1.2.2}
    \begin{split}
\log(E_{X,\rho}(2^{j-1})-1)& \leq\log C+\sum_{k=2}^{m_j}\log\left(1-\frac{c_1}{(U_1 k \log (U_1k)\log\log (U_1k))^{pd}}\right)\\
                           & \leq \log C-c_1\sum_{k=2}^{m_j}\frac{1}{(U_1 k \log (U_1k)\log\log (U_1k))^{pd}}.
    \end{split}
\end{equation*}

Thus, if $p<\frac{1}{d}$, it follows that
\begin{equation*}
\begin{split}
\sum_{k=2}^{m_j}\frac{1}{(U_1 k \log (U_1k)\log\log (U_1k))^{pd}} & \geq \int_{2}^{m_j}\frac{dx}{(U_1 x \log (U_1x)\log\log (U_1x))^{pd}}\\
              & \geq C\int_{2}^{\log m_j}\frac{e^{(1-pd)y}}{(y\log y)^{pd}}dy\\
              & \geq \frac{Ce^{(1-pd)\log m_j}}{(\log m_j\log\log m_j)^{pd}}\\
              & =\frac{Cm_j^{1-pd}}{(\log m_j\log\log m_j)^{pd}},
\end{split}
\end{equation*}
where the third inequality follows after applying integration by parts. From these calculations and Lemma \ref{cl: omegalag}, we get that

\begin{equation*}
  \begin{split} 
\log(E_{X,\rho}(2^{j-1})-1)\leq C\left(1 -\frac{m_{j}^{1-pd}}{(\log m_j\log\log m_j)^{pd}}\right) & \quad\Rightarrow\quad E_{X,\rho}(2^{j-1})-1\leq Ce^{-\frac{(j/\log j)^{1-pd}}{(\log j\log\log j)^{pd}}}\\
                  & \quad\Rightarrow\quad E_{X,\rho}(2^{j-1})-1\leq C e^{-\frac{j^{1-pd}}{\log j(\log\log j)^{pd}}},
\end{split} 
\end{equation*}

where we used that
\[
-\frac{(j/\log j)^{1-pd}}{(\log m_j\log\log m_j)^{pd}}\lesssim -\frac{(j/\log j)^{1-pd}}{(\log j\log\log j)^{pd}}.
\]
From the comparison criterion  of series (by comparing, for instance, with the sequence $a_j=j^{-2}$) we have that 
\[
\sum_{j\geq 4}e^{-\frac{Cj^{1-pd}}{\log j(\log\log j)^{pd}}}<\infty,
\]
and hence from \cite[Theorem 3.1]{linrep} (or equivalently from Theorem \ref{thm: sufcondomegdel} in the case $\omega(t)\asymp t$), we have that $X$ is rectifiable.
\end{proof}

\begin{proof}[Proof of Theorem \ref{thm omegreprec}]
    Let $\omega(t)=\omega_{p}(t)=t(\log (1/t))^{1/d}$ and $X\in\mathcal{D}_{\rho,d}$ (see \eqref{eq: mindensdevpr}). Then, for $0<\alpha<\frac{1}{d^2}$ we have that:
    \[
    \sum_{j\geq 4}(j+1)^{\alpha}\frac{E_{X,\rho}(2^{j-1})-1}{\left(2^{j-1}\omega_{1/d}\left(\frac{1}{2^{j-1}}\right)\right)^{1/d}}\leq \sum_{j\geq4}\frac{1}{(j+1)^{1-\alpha+\frac{1}{d^2}}}<\infty.
    \]
    Thus, the conclusion follows from Proposition \ref{thm: sufcondomegdel}.
    \end{proof}
    \begin{remark}[On the density deviation hypothesis \eqref{eq: mindensdev}]
    In the case $\omega_{1/d}(t)=t(\log(1/t))^{1/d}$, by arguing as in the proof of Theorem \ref{thm: reprect}, the following estimation holds for $r(\log r)^{1/d}$-repetitive Delone sets in $\R^d$:
        \begin{equation}\label{eq: dendev1d}
        \begin{split}
            E_{X,\rho}(2^{j-1})-1\lesssim\frac{1}{\log\log j}.
        \end{split}
    \end{equation}
    Hence, the requirement $X\in\mathcal{D}_{\rho,d}$ is stronger than the density deviation estimate given in \eqref{eq: dendev1d}. In summary, the proof of Theorem \ref{thm: reprect} tell us that $\omega_p$-repetitive Delone sets in $\R^d$, with $0\leq p<1/d$, automatically belong to $\mathcal{D}_{\rho,d}$, while in the proof of Theorem \ref{thm omegreprec}, we have that $X$ belongs to $\mathcal{D}_{\rho,d}$ is actually an assumption that one should put on the Delone set $X$.
    \medskip

    This points out that a certain decreasing rate for the density deviation is necessary for an $\omega$-repetitive Delone set in $\R^d$ with repetitive behavior slower than $\omega_{1/d}$, or in other words, for $r^2\omega(1/r)\gtrsim r(\log r)^{1/d}$, to be $\omega$-rectifiable. In view of this discussion, we ask whether or not the hypothesis $X\in\mathcal{D}_{\rho,d}$ is sharp for Theorems \ref{thm: A} and \ref{thm omegreprec}.
\end{remark}

\subsection{Proof of Theorem \ref{thm: sufcondomegdel}}\label{ssec: prrofthmsuf}

In this Section, we are devoted to proving Theorem \ref{thm: sufcondomegdel}. As it has been extensively discussed in \cite{BurK2, linrep}, Theorem \ref{thm: sufcondomegdel} is a consequence of a continuous version of it; now we are dedicated to stating this version. Given $f\in L^{\infty}(\R^d)$, one is interested in finding solutions $\Phi:\R^d\to\R^d$ to the equation
\begin{equation}\label{eq: planejac}
    (\forall\text{ open and bounded set } A\subset\R^d),\quad\int_{A}f(x)dx=\Vol(\Phi(A)),
\end{equation}

that corresponds to the non-compact version of \cite[equation (1.3)]{RivYe} (see \eqref{eq: jaceq} below). Let $f:\R^d\to\R$ be a positive function such that $f$ and $1/f$ are bounded. Given $\rho>0$, for any cube in $C\subset\R^d$ let $e_{\rho}(C)$ be the {\em density deviation} of $f$ in $C$ with respect to $\rho$, which is given by:
\begin{equation}\label{eq: oscrho}
    e_{\rho}(C):=\max\left\{\frac{\rho}{\fint_C f(x)dx},\frac{\fint_C f(x)dx}{\rho}\right\}.
\end{equation}

Define the {\em error} of the density deviation of $f$, denoted by $E:\N\to\R$, and given by the formula
\begin{equation}\label{eq: errorrho}
    E_{\rho}(k):=\sup(e_{\rho}(C)), 
\end{equation}

where the supremum is taken over all the cubes in $\R^d$ of the form $\prod_{j=1}^d[i_j,i_j+k]$, with $(i_1,\ldots,i_d)\in\Z^d$.
\medskip

With these notations, the continuous version of Theorem \ref{thm: sufcondomegdel} reads as follows.

\begin{prop}\label{prop: omegaplane}
    Let $\omega:(0,1)\to (0,\infty)$ be a modulus of continuity. Let $f:\R^d\to\R$ be a positive function, being constant on each open unit cube with vertices in $\Z^d$, and let $\rho>0$. Assume that $f$ and $1/f$ are bounded. Also, suppose that $E_{\rho}(2^{i-1})-1\lesssim 1/(i+1)$. If 
    \begin{equation}\label{eq: omegaplaneosc}
        \sum_{i\geq 1}(1+i)^{\alpha}\left(\frac{E_{\rho}(2^{i-1})-1}{\left(2^{i-1}\omega\left(\frac{1}{2^{i-1}}\right)\right)^{1/d}}\right)<\infty,
    \end{equation}
    for some $0<\alpha\leq 1$, then there exists a bi-$\omega$-homogeneous bijection $\Phi:\R^d\to\R^d$ solving the equation \eqref{eq: planejac}.
    \end{prop}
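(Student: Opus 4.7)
The plan is to build $\Phi$ as the limit of an infinite composition of local volume-correcting bi-Lipschitz self-maps at dyadic scales, in the spirit of the Burago--Kleiner scheme \cite{BurK2} and its refinements by Aliste--Coronel--Gambaudo \cite{linrep} and Rivi\`ere--Ye \cite{RivYe}, and then to use the summability condition \eqref{eq: omegaplaneosc} together with the regularity hypothesis $E_\rho(2^{i-1})-1\lesssim 1/(i+1)$ to upgrade the usual per-scale bi-Lipschitz estimate into a scale-invariant bi-$\omega$-homogeneous bound.

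For the first step I would set up the multiscale local correction. For each $i\geq 0$, partition $\R^d$ into dyadic cubes $\mathcal{C}_i$ of sidelength $2^i$ with vertices in $2^i\Z^d$. On each $Q\in\mathcal{C}_i$ I apply the Burago--Kleiner ``box lemma'' to obtain a bi-Lipschitz self-map $\psi_Q^{(i)}\colon Q\to Q$, equal to the identity on $\partial Q$, which equalises, across the $2^d$ sub-cubes of side $2^{i-1}$, the integral of the density inherited from the previous scales. The key quantitative input is that $\psi_Q^{(i)}$ can be realised with local distortion and maximal displacement both controlled by an appropriate power of $\varepsilon_i:=E_\rho(2^i)-1$; the dimensional balance between these distortion and displacement bounds is exactly what produces the exponent $1/d$ appearing in the denominator of \eqref{eq: omegaplaneosc}. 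Gluing the $\psi_Q^{(i)}$ along common cube boundaries produces a global bi-Lipschitz map $\Psi_i\colon\R^d\to\R^d$, and I set $\Phi_N:=\Psi_0\circ\Psi_1\circ\cdots\circ\Psi_N$.

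For the second and main step, given $x,y\in B_R$ with $\|x-y\|<R$ I choose integers $k\leq K$ with $2^k\asymp\|x-y\|$ and $2^K\asymp R$ and use the triangle inequality to split
\[
\|\Phi(x)-\Phi(y)\|\ \leq\ \|\Phi_k(x)-\Phi_k(y)\|\ +\ \bigl\|\Phi(x)-\Phi_k(x)\bigr\|\ +\ \bigl\|\Phi(y)-\Phi_k(y)\bigr\|.
\]
The first term is bounded by the composed Lipschitz constant of $\Psi_0\circ\cdots\circ\Psi_k$ applied to $\|x-y\|$, while the other two are controlled by tails of per-scale displacements at scales $i>k$. Using the per-scale bounds from the first step together with the concavity of $\omega$ (equivalently, that $t\mapsto t\omega(1/t)$ is non-decreasing), both pieces can be rearranged into the scale-invariant form $CR\,\omega(\|x-y\|/R)$, and the resulting sufficient condition turns out to be precisely \eqref{eq: omegaplaneosc}. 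The polynomial weight $(i+1)^\alpha$ and the assumption $\varepsilon_{i-1}\lesssim 1/(i+1)$ cooperate here: the latter keeps the composed Lipschitz constant at scale $k$ growing at most polylogarithmically in $k$, and the former provides just enough slack to absorb that growth. Symmetric reasoning applied to the reciprocal density $1/f$ (whose deviations are comparable to those of $f$) gives $\omega$-homogeneity of $\Phi^{-1}$; $C^0$-convergence of $\Phi_N\to\Phi$ and the validity of \eqref{eq: planejac} for the limit follow in a standard way from the displacement bounds.

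The main obstacle, I expect, is this final rearrangement: converting the crude per-scale estimates, which at face value yield only a bi-Lipschitz or bi-H\"older bound, into a genuinely scale-invariant homogeneous bound of the form $CR\,\omega(\|x-y\|/R)$ rather than merely $\omega(\|x-y\|)$. Carrying out the comparison cleanly requires using concavity of $\omega$ symmetrically at both endpoints of the dyadic split above and an Abel-summation-type manipulation that exchanges the polynomial weight $(i+1)^\alpha$ for the logarithmic losses in the product of distortions. The hypothesis $\varepsilon_{i-1}\lesssim 1/(i+1)$ is what keeps this trade-off on the right side of convergence; as seen in Sections \ref{sec: Theorem A}--\ref{ssec: omegarec}, it is precisely the regime in which the corollaries Theorems \ref{thm: A} and \ref{thm omegreprec} operate.
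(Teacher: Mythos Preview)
Your overall architecture is correct, but your execution differs from the paper in two substantive ways, and one of them is a real gap.

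\textbf{Where the paper's argument diverges from yours.} The paper does \emph{not} use the displacement-tail split $\|\Phi(x)-\Phi(y)\|\leq\|\Phi_k(x)-\Phi_k(y)\|+\|\Phi-\Phi_k\|$ at an intermediate dyadic scale $k$. Instead it works entirely with the \emph{finite} composition $u_m$ on the cube $\Omega^{(m)}=C_{\overline n,m}$ and bounds $\|u_m(x)-u_m(y)\|$ directly by $\|\nabla u_m\|_{L^\infty}\|x-y\|$. The exponent $1/d$ does not arise from a ``dimensional balance between distortion and displacement'': it comes purely algebraically. The Rivi\`ere--Ye box lemma is applied once per coordinate direction, so $\|\nabla u_m\|_{L^\infty}\leq\prod_{i}(1+C\varepsilon_i)^d$; the paper then invokes the product-to-sum inequality (Claim~\ref{lemma: clA5}: if $a_i\leq 1/(i+1)$ then $\prod(1+a_i)\leq 1+C_\alpha\sum(i+1)^\alpha a_i$) to get $\|\nabla u_m\|_{L^\infty}\leq C\bigl(\sum_i(i+1)^\alpha\varepsilon_i\bigr)^d$, and \emph{then} factors $2^m\omega(2^{-m}\|x-y\|)$ out of the $d$-th power, which forces the denominator $(2^{i-1}\omega(2^{-(i-1)}))^{1/d}$. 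Concavity and monotonicity of $\omega$ justify this factorisation (Lemma~\ref{lem: omegalocalplane}). This is the place where \eqref{eq: omegaplaneosc} is used, and it is where the hypothesis $\varepsilon_i\lesssim 1/(i+1)$ enters---not through an Abel summation, but precisely through Claim~\ref{lemma: clA5}. Your sketch gestures at this (``Abel-summation-type''), but the actual mechanism is simpler and more explicit.

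\textbf{The gap.} As you have set things up, $\Phi_N=\Psi_0\circ\cdots\circ\Psi_N$ does not converge pointwise: $\Psi_{N+1}$ acts on cubes of side $2^{N+1}$ and moves points by $\asymp 2^{N+1}$, so $\|\Phi_{N+1}-\Phi_N\|_\infty\to\infty$. The paper sidesteps this by never forming the infinite composition: it proves the uniform bi-$\omega$-homogeneous estimate \eqref{eq: omegalocal} for each $u_m$ on $\Omega^{(m)}$ (with constant independent of $m$), extends by the identity outside $\Omega^{(m)}$, and then passes to a subsequential limit via Arzel\`a--Ascoli. Your ``$C^0$-convergence follows in a standard way from the displacement bounds'' is therefore incorrect as stated. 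Finally, the inverse: the paper does not argue via $1/f$; the Rivi\`ere--Ye lemma gives the same $\|\nabla(\Psi^{-1}-\mathrm{Id})\|_\infty$ bound directly (items iv) and v) after Lemma~\ref{lem: lemlinrep}), so $\|\nabla u_m^{-1}\|_{L^\infty}$ enjoys the identical estimate \eqref{eq: mainbound3} and the $\omega$-homogeneity of the inverse follows by the same computation.
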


To prove this continuous version of Theorem \ref{thm: sufcondomegdel}, we need the following claim that relates the product of the $(1+a_i)$'s with a sum of the $a_i$'s, in the same spirit as Claim A.5.

\begin{claim}\label{lemma: clA5}
    Let $\alpha\in (0,1)$ and $(a_i)_{i\geq 1}$ be a sequence of positive real numbers such that 
    \[
    (\forall i\geq 1),\qquad a_i\leq\frac{\alpha}{i+1}\footnote{It should be noted that, while we specify a constant $\alpha$ here, in the statement of Proposition \ref{prop: omegaplane} we do not. This is compatible with the condition $E(2^{i-1})-1\lesssim1/(i+1)$, since we have $E(2^{i-1})-1\leq \frac{C}{\alpha}\cdot\frac{\alpha}{i+1}$, where $C$ is the positive constant appearing from the condition $\lesssim$; in this case we should apply Claim \ref{lemma: clA5} to the sequence $a_i:=\alpha\cdot(E(2^{i-1})-1)/C$}.
    \]
    Then for every $m\geq 1$ there holds:
    \begin{equation}\label{eq: prodsum}
        \prod_{i=1}^m (1+a_i)\leq 1+\sum_{i=1}^m (1+i)^{\alpha} a_i. 
    \end{equation}
\end{claim}

\begin{proof}
    The proof follows by induction over $m$; indeed, observe that for $m=1$:
    \[
    1+a_1\leq 1+2^{\alpha}a_1\quad\Leftrightarrow\quad 1\leq 2^{\alpha},
    \]
    which is obviously true. Now, assume that the inequality \eqref{eq: prodsum} holds until some arbitrary $m>1$. To the inductive step, we have that
    \begin{equation*}
        \begin{split}
            \prod_{i=1}^{m+1}(1+a_i) & = \left(\prod_{i=1}^m (1+a_i)\right)(1+a_{m+1})\\
                                     & \leq\left(1+\sum_{i=1}^m (1+i)^{\alpha}a_i\right)(1+a_{m+1})\\
                                     & = \left(1+\sum_{i=1}^m (1+i)^{\alpha}a_i\right)+a_{m+1}\left(1+\sum_{i=1}^m (1+i)^{\alpha}a_i\right),
        \end{split}
    \end{equation*}

    where we used the inductive hypothesis in the first inequality. It remains to prove that 
    \begin{equation}\label{eq: prodsum1}
        a_{m+1}\left(1+\sum_{i=1}^m (1+i)^{\alpha}a_i\right)\leq (m+2)^{\alpha}a_{m+1}.
    \end{equation}

    By the hypothesis over the sequence $(a_i)_{i\geq 1}$ we have that
    \[
    \sum_{i=1}^m(1+i)^{\alpha}a_i\leq \alpha\sum_{i=1}^m\frac{1}{(1+i)^{1-\alpha}}\leq\alpha\int_1^{m+2}\frac{dt}{t^{1-\alpha}} =(m+2)^{\alpha}-1.
    \]
    Therefore, the left-hand side of \eqref{eq: prodsum1} can be bounded from above by:
    \[
    a_{m+1}\left(1+(m+2)^{\alpha}-1\right)\leq a_{m+1}(m+2)^{\alpha}.
    \]
    This proves \eqref{eq: prodsum1} and concludes the Claim.
\end{proof}

\medskip

Now, we  focus on  showing Proposition \ref{prop: omegaplane}. For $\overline{n}\in\Z^d$ and $m\in\N$, consider (with the same notation as \eqref{eq: madic}) the cube
\begin{equation}\label{eq: 2icube}
C_{\overline{n},m}:=\prod_{l=1}^d [n_l,n_l+2^m].
\end{equation}
Also, for $1\leq i\leq m$ and $\overline{k}\in\Lambda_{m,i}:=\Z^d\cap\prod_{l=1}^d [0,2^{m-i})$ consider the following sub-cube of $C_{\overline{n},m}$:
\[
C_{\overline{n},m,i,\overline{k}}:=\prod_{l=1}^d
[n_l+k_l2^i,n_l+(k_l+1)2^{i}];
\]
note that for fixed $1\leq i\leq m$, these cubes form a partition of $C_{\overline{n},m}$. 
\medskip

As in \eqref{eq: rivyecubes}, we consider the following sets:
\begin{equation}\label{eq: rivyecubes2}
    \begin{split}
    A_{\overline{n},m,i,\overline{k}}^p(\epsilon)&:=\prod_{l=1}^{p-1}\left[n_l+k_l2^{i},n_l+(k_l+1)2^{i}\right]\times\left[n_p+k_p2^{i},n_p+\left(k_p+\frac{1}{2}2^{i}\right)\right]\\
        &\times\prod_{l=p+1}^d\left[n_l+\left(k_l+\frac{\epsilon_l}{2}\right)2^{i},n_l+\left(k_l+\frac{\epsilon_l+1}{2}\right)2^{i}\right];\\
        B_{\overline{n},m,i,\overline{k}}^p(\epsilon)&:=\prod_{l=1}^{p-1}\left[n_l+k_l2^{i},n_l+(k_l+1)2^{i}\right]\times\left[n_p+\left(k_p+\frac{1}{2}2^{i}\right),n_p+(k_p+1)2^{i}\right]\\
        &\times\prod_{l=p+1}^d\left[n_l+\left(k_l+\frac{\epsilon_l}{2}\right)2^{i},n_l+\left(k_l+\frac{\epsilon_l+1}{2}\right)2^{i}\right];\\
        D_{\overline{n},m,i,\overline{k}}^p(\epsilon)&:=A_{\overline{n},m,i,\overline{k}}^p(\epsilon)\cup B_{\overline{n},m,i,\overline{k}}^p(\epsilon).
    \end{split}
\end{equation}

Keeping the notations of $\alpha_{\overline{n},m,i,\overline{k}}^p(\epsilon),\beta_{\overline{n},m,i,\overline{k}}^p(\epsilon)$ as  \eqref{eq: localdens} for $A_{\overline{n},m,i,\overline{k}}^p(\epsilon)$ and $B_{\overline{n},m,i,\overline{k}}^p(\epsilon)$, the key Lemma in \cite{linrep} in order to control the oscillations of $f$ is given by the next, where for simplicity, from now on, we write $E_\rho$ as just $E$.
\begin{lemma}\label{lem: lemlinrep}
    Under the hypotheses of Proposition \ref{prop: omegaplane}, for every $\epsilon\in\{0,1\}^d$, $1\leq p\leq d,\ m>0$, $\overline{k}\in\Lambda_{m,i}$, where $1\leq i\leq m$, and $\overline{n}\in\Z^d$, there holds that
    \begin{equation}\label{eq: lemalinrep}
        \frac{1}{2(E_{\rho}(2^{i-1}))^2}\leq\alpha_{\overline{n},m,i,\overline{k}}^p(\epsilon)\leq\frac{E_{\rho}(2^{i-1})^2}{2}\quad\text{and}\quad\frac{1}{2(E_{\rho}(2^{i-1}))^2}\leq\beta_{\overline{n},m,i,\overline{k}}^p(\epsilon)\leq\frac{E_{\rho}(2^{i-1})^2}{2}.
    \end{equation}

    In particular, there exists $0<\eta<1$ such that
    \begin{equation}\label{eq: lemlinrep2}
        \eta\leq\alpha_{\overline{n},m,i,\overline{k}}^p(\epsilon)\quad\text{and}\quad\eta\leq\beta_{\overline{n},m,i,\overline{k}}^p(\epsilon).
    \end{equation}
\end{lemma}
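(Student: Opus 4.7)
Reading \eqref{eq: localdens} as defining $\alpha_{\bar{n},m,i,\bar{k}}^p(\epsilon)=\int_A f/\int_D f$ and $\beta_{\bar{n},m,i,\bar{k}}^p(\epsilon)=\int_B f/\int_D f$ (so in particular $\alpha+\beta=1$), the strategy is to estimate $\int_A f$ and $\int_B f$ separately by applying the dyadic control $E(2^{i-1})$ after decomposing each of $A$ and $B$ into integer cubes of sidelength $2^{i-1}$. The key geometric observation, read off from \eqref{eq: rivyecubes2}, is that $A^p(\epsilon)$ is an axis-parallel box with the first $p-1$ sides of length $2^i=2\cdot 2^{i-1}$ and the remaining $d-p+1$ sides of length $2^{i-1}$, with all corners at integer points (the half-integer shifts $\epsilon_l/2$ contribute $\epsilon_l\cdot 2^{i-1}$, which is an integer because $i\geq 1$). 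Consequently $A^p(\epsilon)$ tiles into exactly $2^{p-1}$ integer $2^{i-1}$-cubes, the analogue holds for $B^p(\epsilon)$, and $\Vol(A)=\Vol(B)=\tfrac12\Vol(D)$.

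On every integer $2^{i-1}$-cube $Q$ the definition of $E_\rho$ in \eqref{eq: errorrho} gives $\rho/E(2^{i-1})\leq \fint_Q f\leq \rho\,E(2^{i-1})$. Summing these bounds weighted by $\Vol(Q)$ across the sub-cubes of $A$ (and separately of $B$) produces
\[
\frac{\rho\,\Vol(A)}{E(2^{i-1})}\leq \int_A f\leq \rho\,E(2^{i-1})\,\Vol(A),
\]
and the same inequality with $A$ replaced by $B$. Inserting the extremal values into the ratio $\alpha=\int_A f/(\int_A f+\int_B f)$ and cancelling the common factor $\rho\,\Vol(A)$ via $\Vol(A)=\Vol(B)$ immediately yields $1/(2E(2^{i-1})^2)\leq \alpha\leq E(2^{i-1})^2/2$, which is precisely \eqref{eq: lemalinrep}; the argument for $\beta$ is symmetric.

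For the ``in particular'' statement \eqref{eq: lemlinrep2}, the scale-dependent lower bound $1/(2E(2^{i-1})^2)$ has to be replaced by a constant independent of $i$. Here I would argue directly from the $L^\infty$ control: if $m:=\mathrm{ess\,inf}\,f>0$ and $M:=\|f\|_\infty<\infty$, then $\int_A f\geq m\Vol(A)$ and $\int_A f+\int_B f\leq 2M\Vol(A)$, so $\alpha,\beta\geq m/(2M)=:\eta\in(0,1)$, independent of $\bar{n},m,i,\bar{k},p,\epsilon$. The only delicate point in the whole argument is the combinatorial verification that $A^p(\epsilon)$ and $B^p(\epsilon)$ do decompose cleanly into integer $2^{i-1}$-cubes — once that bookkeeping is settled, the inequalities reduce to a cube-by-cube application of the definition of $E$.
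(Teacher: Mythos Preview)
Your argument is correct. The paper does not actually prove this lemma: it is quoted verbatim as ``the key Lemma in \cite{linrep}'' and no proof is supplied here, so there is nothing in the present paper to compare against. Your line of reasoning---decomposing $A^p_{\overline{n},m,i,\overline{k}}(\epsilon)$, $B^p_{\overline{n},m,i,\overline{k}}(\epsilon)$ and $D^p_{\overline{n},m,i,\overline{k}}(\epsilon)$ into $2^{p-1}$, $2^{p-1}$ and $2^p$ integer cubes of sidelength $2^{i-1}$ respectively, applying the two-sided bound $\rho/E(2^{i-1})\le\fint_Q f\le \rho\,E(2^{i-1})$ on each such cube $Q$, and then taking the ratio using $\Vol(A)=\Vol(B)=\tfrac12\Vol(D)$---is exactly the intended mechanism and matches the proof in \cite{linrep}. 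Your verification that the corners are integers (because $\epsilon_l 2^{i-1}\in\Z$ for $i\ge1$) is the one nontrivial bookkeeping step, and you handled it correctly. The ``in particular'' part via $\eta:=\mathrm{ess\,inf}\,f/(2\|f\|_\infty)$ is also fine; alternatively one can invoke the boundedness of $(E(2^{i-1}))_{i\ge1}$ from Claim~\ref{claim: boundev} to get a uniform lower bound $1/(2\sup_i E(2^{i-1})^2)$, but your direct route is equally valid and arguably cleaner.
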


From \cite[Lemma 1]{RivYe} (see Lemma \ref{lem: rivye} below), there exists a bi-Lipschitz homeomorphism $\Phi_{\overline{n},m,i,\overline{k}}^p(\epsilon)$ from $D_{\overline{n},m,i,\overline{k}}^p(\epsilon)$ to itself such that 

\begin{itemize}
    \item[i) ] $\Phi_{\overline{n},m,i,\overline{k}}^p(\epsilon)(x)=x$ if $x\in\partial D_{\overline{n},m,i,\overline{k}}^p(\epsilon)$;
    \item[ii) ] $\mathsf{det}(\nabla \Phi_{\overline{n},m,i,\overline{k}}^p(\epsilon))=2\alpha_{\overline{n},m,i,\overline{k}}^p(\epsilon)$ in $A_{\overline{n},m,i,\overline{k}}^p(\epsilon)$;
    \item[iii) ] $\mathsf{det}(\nabla \Phi_{\overline{n},m,i,\overline{k}}^p(\epsilon))=2\beta_{\overline{n},m,i,\overline{k}}^p(\epsilon)$ in $B_{\overline{n},m,i,\overline{k}}^p(\epsilon)$;
    \item[iv) ] $\|\nabla(\Phi_{\overline{n},m,i,\overline{k}}^p(\epsilon)-\mathsf{Id})\|_{L^{\infty}(D_{\overline{n},m,i,\overline{k}}^p(\epsilon))}\leq\frac{C_{\eta}}{2}(E(2^{i-1})^2-E(2^{i-1})^{-2})$;
    \item[v) ] $\|\nabla(\Phi_{\overline{n},m,i,\overline{k}}^p(\epsilon)^{-1}-\mathsf{Id})\|_{L^{\infty}(D_{\overline{n},m,i,\overline{k}}^{p}(\epsilon))}\leq\frac{C_{\eta}}{2}(E(2^{i-1})^2-E(2^{i-1})^{-2})$;
\end{itemize}
Let $\Phi_{\overline{n},m,i,\overline{k}}^p:C_{\overline{n},m,i,\overline{k}}\to C_{\overline{n},m,i,\overline{k}}$ that coincides with $\Phi_{\overline{n},m,i,\overline{k}}^p(\epsilon)$ in $D_{\overline{n},m,i,\overline{k}}^p(\epsilon)$, and $\Phi_{\overline{n},m,i,\overline{k}}:C_{\overline{n},m}\to C_{\overline{n},m}$ be the bi-Lipschitz homeomorphism given by
\[
\Phi_{\overline{n},m,i,\overline{k}}:=\Phi_{\overline{n},m,i,\overline{k}}^d\circ\Phi_{\overline{n},m,i,\overline{k}}^{d-1}\circ\ldots\circ\Phi_{\overline{n},m,i,\overline{k}}^1.
\]
Observe that, for $\Psi\in\{\Phi_{\overline{n},m,i,\overline{k}}, \Phi_{\overline{n},m,i,\overline{k}}^{-1}\}$, we have that 
\begin{equation}\label{eq: mainbound1}
    \|\nabla\Psi\|_{L^{\infty}(C_{\overline{n},m,i,\overline{k}})}\leq\left(1+\frac{C_{\eta}}{2}\left(E(2^{i-1})^2-E(2^{i-1})^{-2}\right)\right)^d.
\end{equation}

Now let $\Phi_{\overline{n},m,i}:C_{\overline{n},m}\to C_{\overline{n},m}$ be the bi-Lipschitz homeomorphism that coincides with $\Phi_{\overline{n},m,i,\overline{k}}$ in each sub-cube $C_{\overline{n},m,i,\overline{k}}$, and finally consider the bi-Lipschitz homeomorphism $\Phi_{\overline{n},m}:C_{\overline{n},m}\to C_{\overline{n},m}$ given by:

\begin{equation}\label{eq: mainbiliphom}
    \Phi_{\overline{n},m}:=\Phi_{\overline{n},m,m}\circ\Phi_{\overline{n},m,m-1}\circ\ldots\circ\Phi_{\overline{n},m,1};
\end{equation}

observe that $\Phi_{\overline{n},m}$ satisfies the following:
\begin{enumerate}
    \item $\Phi_{\overline{n},m}=\mathsf{Id}$ on $\partial C_{\overline{n},m}$;
    \item  
    $\mathsf{Jac}(\Phi_{\overline{n},m})=f/\fint_{C_{\overline{n},m}}f(x)dx$;
\end{enumerate}

Let $(\Omega^{(m)})_{m\geq 1}$ be a sequence of squares centered at the origin such that, for each $m\geq 1$, the square $\Omega^{(m)}$ is of the form $C_{\overline{n}_1,m}$ for some $\overline{n}_1\in\Z^d$. Finally, consider the homeomorphisms $u_m$ from $\Omega^{(m)}$ to itself as follows: if $x\in\Omega^{(m)}$, then
\begin{equation}\label{eq: homsolplane}
    u_{m}(x)=(\Phi_{\overline{n}_1,m}\circ\ldots\circ\Phi_{\overline{n}_m,1})(x),
\end{equation}
where the $\overline{n}_j$'s satisfy that $C_{\overline{n}_m,1}\subset\ldots\subset C_{\overline{n}_1,m}$.
\medskip

Let us fix two positive integers $m\geq i_0$. After partitioning the cube $\Omega^{(m)}$ in a family of sub-cubes $C_{\overline{n}_1,i_0},\ldots,C_{\overline{n}_{m-i_0},i_0}$ with disjoint interiors and with the same sidelength $2^{i_0}$, we get that
\begin{equation}\label{eq: homsolpart}
    \|\nabla u_{m}\|_{L^{\infty}(C_{\overline{n},m})}=\max\{\|\nabla u_{m}\|_{L^{\infty}(C_{\overline{n}_j,i_0})}:\ 1\leq j\leq m-i_0\}
\end{equation}

Then, by the definition \eqref{eq: homsolplane} we get the following estimation for each $1\leq j\leq m-i_0$:
\begin{equation}\label{eq: mainbound2}
\begin{split}
    \|\nabla u_{m}^{\pm 1}\|_{L^{\infty}(C_{\overline{n}_j,i_0})}&\leq\|\nabla\Phi_{\overline{n}_j,i_0}^{\pm 1}\|_{L^{\infty}(C_{\overline{n_j},i_0})}\prod_{i=i_0}^{m}\left(1+\frac{C_{\eta}}{2}\left(E(2^{i-1})^2-E(2^{i-1})^{-2}\right)\right)^d.
    \end{split}
\end{equation}

It should be notice that $\|\nabla\Phi_{\overline{n}_j,i_0}^{\pm 1}\|_{L^{\infty}(C_{\overline{n}_j,i_0})}$ is a constant that does not depend on $\overline{n}_j\in\Z^d$ since, by the same previous argument to obtain \eqref{eq: mainbound2}, it follows that:
\[
\|\nabla\Phi_{\overline{n}_j,i_0}^{\pm 1}\|_{L^{\infty}(C_{\overline{n}_j,i_0})}\leq \prod_{i=1}^{i_0-1}\left(1+\frac{C_{\eta}}{2}\left(E(2^{i-1})^2-E(2^{i-1})^{-2}\right)\right)^d=:C_1.
\]

Now,  put 
\begin{equation}\label{eq: i0}
 a_i=  \frac{C_{\eta}}{2}\left(E(2^{i-1})-1\right)\lesssim  \frac{1}{1+i}
\end{equation}

then from Claim \ref{lemma: clA5}, \eqref{eq: homsolpart}, \eqref{eq: i0} and \eqref{eq: mainbound2} the following holds
\begin{equation}\label{eq: mainbound3}
    \|\nabla u_{m}^{\pm 1}\|_{L^{\infty}(C_{\overline{n},m})}\leq C_2\left(\sum_{i=i_0}^m (i+1)^{\alpha}\left(E(2^{i-1})-1\right)\right)^d,
\end{equation}

where $C_2=C_1\cdot C_{\eta}/2$. Therefore, we see that Proposition \ref{prop: omegaplane} is a consequence of the following Lemma.

 \begin{lemma}\label{lem: omegalocalplane}
        Under the hypotheses of Proposition \ref{prop: omegaplane}, there exists a positive constant $C$ such that for every $\overline{n}\in\Z^d$ and any $m\geq i_0$, where $i_0\in\N$ verifies \eqref{eq: i0}, the bijection $u_m: C_{\overline{n},m}\to C_{\overline{n},m}$ given in \eqref{eq: homsolplane} satisfies that:
        \begin{equation}\label{eq: omegalocal}
            (\forall x,y\in C_{\overline{n},m}),\quad\|\Psi(x)-\Psi(y)\|\leq C\cdot 2^{m}\omega\left(\frac{\|x-y\|}{2^m}\right),
        \end{equation}
        where $\Psi\in\{u_m,u_m^{-1}\}$.
    \end{lemma}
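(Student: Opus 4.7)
I will upgrade the global bi-Lipschitz bound \eqref{eq: mainbound3} to the $\omega$-regularity estimate \eqref{eq: omegalocal} by means of a dyadic splitting of $u_m$ at the length-scale $r:=\|x-y\|$. The case $r \geq 2^m$ is immediate: the left-hand side is at most $\sqrt{d}\,2^m$ while $\omega(r/2^m)$ is bounded below by a positive constant after extending $\omega$ past $1$ by $\omega(1)$. Assume therefore $r < 2^m$, and let $s \in \{1, \ldots, m\}$ be the integer with $2^{s-1} < r \leq 2^s$.

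The main idea is to isolate, within the nested composition \eqref{eq: homsolplane}--\eqref{eq: mainbiliphom}, the contribution of the ``small-scale'' sub-operations (those at scales $2^i$ with $i \leq s$) from that of the ``large-scale'' ones ($i > s$). Each scale-$2^i$ sub-operation is the identity on the boundary of every $2^i$-cube, and $2^i$-cubes refine $2^s$-cubes when $i \leq s$; combining this with a careful analysis of the nested structure $C_{\overline n_m,1} \subset \cdots \subset C_{\overline n_1,m}$, the cumulative small-scale part of $u_m$ displaces every point by at most $\sqrt{d}\,2^s$. Consequently, for the decomposition $u_m = W \circ V$ in which $V$ gathers the small-scale and $W$ the large-scale sub-operations,
\[
\|V(x)-V(y)\| \leq r + 2\sqrt{d}\,2^s \leq (2\sqrt{d}+1)\,2^s.
\]
Meanwhile, rerunning the derivation of \eqref{eq: mainbound3} with the product restricted to $i \in \{s+1, \ldots, m\}$ and applying Claim \ref{lemma: clA5} to the shifted sequence $\tilde a_j := a_{s+j}$ (which still satisfies $\tilde a_j \lesssim 1/(1+j)$) yields
\[
\|\nabla W^{\pm 1}\|_{L^\infty(C_{\overline n,m})} \leq \Bigl(1 + C\sum_{j=1}^{m-s}(1+j)^\alpha a_{s+j}\Bigr)^d.
\]

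The heart of the argument is then to establish, uniformly in $s \in \{1,\ldots,m\}$ and $m \geq i_0$, the summation estimate
\[
1 + C\sum_{j=1}^{m-s}(1+j)^\alpha a_{s+j} \leq C'\,\phi_{m-s+1}, \qquad \phi_k := (2^{k-1}\omega(2^{-(k-1)}))^{1/d}.
\]
Writing $(1+i)^\alpha a_i = \epsilon_i\,\phi_i$ with $\sum_i \epsilon_i < \infty$ (precisely hypothesis \eqref{eq: omegaplaneosc}), the shifted sum becomes $\sum_j \epsilon_{s+j}\,\phi_{s+j}\,\bigl(\tfrac{1+j}{1+s+j}\bigr)^\alpha$. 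Splitting the range at $j=s$ and exploiting (i) the quasi-geometric growth $1 \leq \phi_{i+1}/\phi_i \leq 2^{1/d}$ coming from concavity of $\omega$, (ii) the tail decay of $\sum_{i \geq s+1}\epsilon_i$, and (iii) the savings factor $(1+j)/(1+s+j)$ that is small when $j \ll s$, one obtains the required bound. This bookkeeping is the principal technical obstacle; it absorbs all the strength of \eqref{eq: omegaplaneosc} and distinguishes the present argument from the purely Lipschitz case treated in \cite{linrep}.

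Combining the two estimates yields
\[
\|u_m(x)-u_m(y)\| \leq \|\nabla W\|_{L^\infty}\,\|V(x)-V(y)\| \lesssim \phi_{m-s+1}^d \cdot 2^s = 2^m\,\omega(2^{s-m}),
\]
which, via the doubling inequality $\omega(2t) \leq 2\omega(t)$ valid for concave $\omega$, becomes the desired bound $\lesssim 2^m \omega(r/2^m)$. The estimate for $u_m^{-1}$ is obtained by an identical argument, using property v) in place of property iv) in the pointwise control of the elementary building blocks $\Phi_{\overline n, m, i, \overline k}^p(\epsilon)$.
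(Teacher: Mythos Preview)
Your route is genuinely different from the paper's and considerably more elaborate. The paper does \emph{not} split $u_m$ at the scale of $\|x-y\|$: it applies the single global Lipschitz estimate \eqref{eq: mainbound3} once, writing
\[
\|u_m(x)-u_m(y)\|\le C_2\,\|x-y\|\,\Bigl(\sum_{i}(i+1)^\alpha(E(2^{i-1})-1)\Bigr)^d,
\]
and then manipulates the sum algebraically. Choosing $\ell$ with $2^{-\ell}\le\|x-y\|\le 2^{-\ell+1}$, concavity of $\omega$ gives $2^{i-1}\omega(2^{-\ell}/2^{i-1})\le 2^{m}\omega(2^{-\ell}/2^{m})$ for every $i\le m$, so the factor $(2^m\omega(2^{-\ell-m}))^{1/d}$ can be pulled out; a second use of concavity, $\omega(2^{-\ell}/2^{i-1})\ge 2^{-\ell}\omega(2^{-(i-1)})$, reduces the remaining sum to $\sum_i\epsilon_i$, finite by \eqref{eq: omegaplaneosc}. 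The factor $\|x-y\|\cdot 2^{\ell}$ then collapses to a constant. This two--line computation converts the Lipschitz bound into the $\omega$--homogeneous bound in one stroke, with no decomposition of the map and no shifted--index combinatorics.

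Your decomposition $u_m=W\circ V$ is correct, and steps 1--4 of your plan are fine, but the ``principal technical obstacle'' you isolate---the inequality $\sum_{j=1}^{m-s}(1+j)^\alpha a_{s+j}\lesssim\phi_{m-s+1}$---is not adequately justified by your sketch. Splitting at $j=s$ and invoking the tail decay of $\sum_{i>s}\epsilon_i$ cannot, by itself, beat the quasi--geometric growth $\phi_{s+j}/\phi_{m-s+1}\le 2^{(2s+j-m)/d}$ when $s$ is comparable to $m$: summability of $(\epsilon_i)$ gives no rate, so the product can blow up. A correct proof must interlace hypothesis \eqref{eq: omegaplaneosc} with the pointwise bound $a_i\lesssim 1/(1+i)$ in a more delicate way---and the cleanest such argument is exactly the paper's concavity trick above, applied to (an upper bound for) your partial product. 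In short: the estimate you need is true, but your proposed justification has a gap, and the shortest way to close it makes the scale--splitting of $u_m$ superfluous.
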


\begin{proof}
    Let $0<\alpha<1$ be fixed such that \eqref{eq: omegaplaneosc} holds. Write $\Omega^{(m)}=C_{\overline{n},m}$ for some $\overline{n}\in\Z^d$, as above. We only verify \eqref{eq: omegalocal} for $u_{m}$; the argument is the same for $u_m^{-1}$. Let $x\neq y$ in $C_{\overline{n},m}$. Then there exists $\ell\in\Z$, with $-\ell+1\leq m$, such that
        
        \[
        2^{-\ell}\leq\|x-y\|\leq 2^{-\ell+1}\leq\sqrt{d}2^m.
        \]
        
        From \eqref{eq: mainbound3} we get that
                 \begin{equation*}
             \begin{split}
                 \|u_m(x)-u_m(y)\|&\leq\|\nabla u_{m}\|_{L^{\infty}(C_{\overline{n},m})}\|x-y\|\\
                      &\leq C_2 \|x-y\|\left(\sum_{i=i_0}^m(i+1)^{\alpha}\left(E(2^{i-1})-1\right)\right)^d\\
                      &= C_2\|x-y\| 2^m \omega\left(\frac{2^{-\ell}}{2^m}\right)\left(\sum_{i=i_0}^m\frac{(i+1)^{\alpha}\left(E(2^{i-1})-1\right)}{(2^{i-1}\omega(2^{-\ell}/2^{i-1}))^{1/d}}\right)^d\\
                      &\leq 
                       C_2\|x-y\|  2^\ell 2^m \omega\left(\frac{\|x-y\|}{2^m}\right)\left(\sum_{i=i_0}^m\frac{(i+1)^{\alpha}\left(E(2^{i-1})-1\right)}{(2^{i-1}\omega(1/2^{i-1}))^{1/d}}\right)^d\\     &\leq 
                      2 C_2 2^m \omega\left(\frac{\|x-y\|}{2^m}\right)\left(\sum_{i=i_0}^m\frac{(i+1)^{\alpha}\left(E(2^{i-1})-1\right)}{(2^{i-1}\omega(1/2^{i-1}))^{1/d}}\right)^d,    
             \end{split}
         \end{equation*}
                  where in the last three steps we use that $\omega:(0,1)\to (0,\infty)$  is increasing and is concave, that  is, 
 $2^{i-1}\omega\left(\frac{2^{-\ell}}{2^{i-1}}\right)\leq 
  2^m \omega\left(\frac{2^{-\ell}}{2^{m}}\right)$ and
  $ 
   \frac{1}{2^\ell} \omega\left(\frac{1}{2^{i-1}}\right)
  \leq \omega\left(\frac{2^{-\ell}}{2^{i-1}}\right)$
   respectively. Finally, from \eqref{eq: omegaplaneosc} we obtain \eqref{eq: omegalocal} as desired.

\end{proof}

\begin{remark}
    Since $\Phi_{\overline{n},m}$ is $\|\nabla\Phi_{\overline{n},m}\|_{L^{\infty}(C_{\overline{n},m})}$-bi-Lipschitz, then $\Phi_{\overline{n},m}$ is obviously bi-$\omega$-homogeneous for every modulus of continuity $\omega$ being asymptotically greater than Lipschitz; but with this argument the constant defining the homogeneity condition depends on $m$, unlike to \eqref{eq: omegalocal} where the constant $C$ is independent on $m$. 

\end{remark}
    \begin{proof}[Proof of Proposition \ref{prop: omegaplane}]
        Consider $\Psi_m:\R^d\to\R^d$ defined by
        
        \begin{equation*}
  \Psi_m|_{\Omega^{(m)}}=u_m\quad\text{and}\quad\Psi_m|_{\R^d\setminus\Omega^{(m)}}=\mathsf{Id}.
        \end{equation*}
        
        Then, from Lemma \ref{lem: omegalocalplane}, the maps $\Psi_m$ are bi-$\omega$-homogeneous having the same constant $C$ given in \eqref{eq: omegalocal}. In particular, since $f\in L^{\infty}(\R^d)$, the sequence $(\Psi_m)_{m\geq 1}$ is uniformly bounded and, by \eqref{eq: omegalocal}, it is equicontinuous in each $x\in\R^d$; thus, from Arzel\`a-Ascoli theorem, converges (under a subsequence) to a bi-$\omega$-homogeneous mapping $\Psi:\R^d\to\R^d$.
        \medskip

        Finally, since $\Psi_m\longrightarrow\Psi$ in the $L^{\infty}$-norm, and that
        \[
       \fint_{C_m} f(x) dx\longrightarrow\rho,
        \]
        then, for every bounded open set $E\subset\R^d$ we get
        \[
\left(\fint_{C_m} f(x) dx\right)\Vol(\Psi_m(E))\longrightarrow\rho\Vol(\Psi(E)).
        \]
        Therefore, the map $\Phi:=\rho\cdot\Psi:\R^d\to\R^d$ is bi-$\omega$-homogeneous and verifies \eqref{eq: planejac}. This finishes the proof of Proposition \ref{prop: omegaplane}.
    \end{proof}

    \begin{remark}
        It is worth mentioning that for $E\subset\R^d$ being an open bounded set, the sequence of volumes $(\Vol(\Psi_m(E)))_{m\geq 1}$ passes to the limit to $\Vol(\Psi(E))$, but $\mathsf{Jac}(\Psi_m)$ does not necessarily pass to the limit $\mathsf{Jac}(\Psi)$, since $\Psi$ might not be Lipschitz.
    \end{remark}

To finish the proof of Theorem \ref{thm: sufcondomegdel} is a consequence of Hall's marriage Theorem and follows the very same lines as \cite[Theorem 1.3]{BurK2} (see also part 3 of \cite[Theorem 5.1]{McMu}).

\begin{appendices}
\section{Resolutions of the prescribed volume form equation in intermediate regularity}\label{sec: jaceq}

This Appendix is dedicated to finding solutions with a gain of regularity to the prescribed volume form equation on the unit cube (see \eqref{eq: jaceq} below). This problem is known to be connected with the rectifiability of Delone sets since the (im)possibility of solving this equation is related to the (im)possibility of finding bijections from a Delone set in $\R^d$ onto $\Z^d$ with some regularity; see for instance \cite{BurK2, McMu}.
\medskip

Let us consider a density function $f:[0,1]^d\to\R$ verifying the following condition:
\begin{equation}\label{eq: densfunc}
    \inf_{x\in [0,1]^d}f(x)>0\quad\text{and}\quad\int_{[0,1]^d}f(x)dx=1.
\end{equation}

Given $f\in\mathsf{Reg}\subset L^1([0,1]^d)$ where $\mathsf{Reg}$ is some regularity class, we are interested in finding a homeomorphism $\Phi:[0,1]^d\to [0,1]^d$ verifying the prescribed volume form equation: 
\begin{equation}\label{eq: jaceq}
    \left\{\begin{array}{rcl}
    \forall E\text{  open  set in }  [0,1]^d,& \displaystyle\int_E f(x)dx=\mathsf{Vol}(\Phi(E)),\\
    \Phi(x)=x,& \text{on }\partial [0,1]^d;
\end{array}\right.
\end{equation}
particularly we are interested in the cases when the regularity class $\mathsf{Reg}$ is $L^{\infty}([0,1]^d)$. Moreover, for such a choice of $f$, we are seeking a certain gain of regularity for the solution $\Phi$; for instance, in \cite{RivYe}, it is shown that if $\mathsf{Reg}=C([0,1]^d)$, then $\Phi,\Phi^{-1}$ are $\alpha$-Holder for every $\alpha\in (0,1)$, and if $\mathsf{Reg}=L^{\infty}([0,1]^d)$ or $\mathsf{BMO}([0,1]^d)$, then $\Phi,\Phi^{-1}$ belong to $C^{0,\alpha}([0,1]^d)$, for $0<\alpha<\alpha_f<1$, where $\alpha_f$ only depends on $f$ (the exponent $\alpha_f$ for the $ L^{\infty}$-case can differ from the $\mathsf{BMO}$-case).

\medskip

Let $f\in L^{\infty}([0,1]^d)$ be a density function verifying \eqref{eq: densfunc} and $\omega:(0,1)\to (0,\infty)$ be a modulus of continuity. We say that $f$ is {\em $\omega$-realizable} if there exists a homeomorphism $\Phi$ from $[0,1]^d$ to itself that is bi-$\omega$-regular (see \eqref{eq: omegamap}) and verifies \eqref{eq: jaceq}. 
  In \cite[Theorem 1.6]{irregsep} it is shown that if $\omega_{\gamma}(t):=t(\log(1/t))^{\gamma}$, then for a small-enough choice of $\gamma\in (0,1)$, there exists a density function $f\in L^{\infty}([0,1]]^d)$ (even continuous) that is non-$\omega_{\gamma}$-realizable. This and the examples produced in \cite{BurKl} naturally leave open the question of, given a modulus of continuity $\omega$, finding conditions for which a function satisfying \eqref{eq: densfunc} is $\omega$-realizable without being the Jacobian of a bi-Lipschitz homeomorphism. In this work, we address this problem for the first time by finding conditions for realisability (see Proposition \ref{thm: osc} below) and by producing examples of functions with such properties (see Section~\ref{ssec: omeganonlipdens} below).
\medskip


\subsection{A reminder on Rivière-Ye's construction}\label{ssec: Riv-Ye}

The following Lemma, which appears in \cite[Lemma 1]{RivYe} is the cornerstone for constructing solutions of \eqref{eq: jaceq} with a desired regularity; we present here a slightly different version of it whose proof is straightforward and left to the reader.
\begin{lemma}[Rivière-Ye's Lemma, extended]\label{lem: rivye}
    Let $D=[0,1]^d$ and $M\geq 2$ be an even natural number. For each $j\in\{1,\ldots,M\}$, consider the boxes $A_j=[0,1]^{d-1}\times [(j-1)/M,j/M]$. Let $\alpha_1,\ldots,\alpha_M$, be positive numbers such that $\sum_{j=1}^M\alpha_j=1$. Then there exists a bi-Lipschitz homeomorphism $\Phi:D\to D$ such that
    \begin{itemize}
        \item[i) ]$\Phi(x)=x$ for $x\in\partial D$;
        \item[ii) ]for each $j=1,\ldots,M/2$ there holds

        \[\mathsf{det}(\nabla\Phi)=\frac{2\alpha_{2j-1}}{\alpha_{2j-1}+\alpha_{2j}}\text{ in }A_{2j-1},\quad\text{and}\quad
        \mathsf{det}(\nabla\Phi)=\frac{2\alpha_{2j}}{\alpha_{2j-1}+\alpha_{2j}}\text{ in }A_{2j};
        \]
        \item[iii) ]$\|\nabla(\Phi-\mathsf{Id})\|_{L^{\infty}(D)}\leq C_{\eta}\cdot\displaystyle\max_{j=1,\ldots,M/2}|\alpha_{2j-1}-\alpha_{2j}|$;
        \item[iv) ]$\|\nabla(\Phi^{-1}-\mathsf{Id})\|_{L^{\infty}(D)}\leq C_{\eta}\cdot\displaystyle\max_{j=1,\ldots,M/2}|\alpha_{2j-1}-\alpha_{2j}|$,\\

        where $0<\eta<\min\alpha_j<1-\eta$ is given,  and $C_{\eta}>0$ only depends on $\eta$.
    \end{itemize}
\end{lemma}

\begin{proof}
    For each $j\in\{1,\ldots, M/2\}$, let $B_j:=A_{2j-1}\cup A_{2j}$. After rescaling by an affine linear map, from Rivière-Ye's Lemma \cite[Lemma 1]{RivYe} we have that there must exist a bi-Lipschitz homeomorphism $\Phi_j:B_j\to B_j$ such that
    \begin{enumerate}
        \item $\Phi_j(x)=x$ for $x\in\partial B_j$;
        \item $\mathsf{det}(\nabla\Phi_j)=\frac{2\alpha_{2j-1}}{\alpha_{2j-1}+\alpha_{2j}}$ in $A_{2j-1}$;
        \item $\mathsf{det}(\nabla\Phi_j)=\frac{2\alpha_{2j}}{\alpha_{2j-1}+\alpha_{2j}}$ in $A_{2j}$;
        \item $\|\nabla(\Psi_j-\mathsf{Id})\|_{L^{\infty}(B_j)}\leq \widetilde{C_{\eta}}\frac{|\alpha_{2j-1}-\alpha_{2j}|}{\alpha_{2j-1}+\alpha_{2j}}\leq \underbrace{\frac{\widetilde{C_{\eta}}}{2\eta}}_{=:C_{\eta}}\cdot|\alpha_{2j-1}-\alpha_{2j}|$,
    \end{enumerate}
    where $\Psi_j\in\{\Phi_j,\Phi_j^{-1}\}$, $\eta>0$ is as in the statement of Lemma \ref{lem: rivye} and $\widetilde{C_{\eta}}>0$ is the constant depending only on $\eta$ given by \cite[Lemma 1]{RivYe}. Then define $\Phi:D\to D$ by letting
    \[
    \Phi(x):=\Phi_j(x)\quad\text{if }x\in B_j.
    \]
    Observe that $\Phi$ is well-defined since the $B_j$'s have disjoint interiors, their union is $D$, and since $\Phi_j$ and $\Phi_{j+1}$ coincide in $B_j\cap B_{j+1}$, for each $j\in\{1,\ldots,M/2\}$. Moreover, since $\Phi_j|_{\partial B_j}=\mathsf{id}$, then $\Phi$ is also a bi-Lipschitz homeomorphism with bi-Lipschitz constant at most the $\max_{1\leq j\leq M/2}\mathsf{biLip}(\Phi_j)$. 
    \medskip

    Finally, parts iii) and iv) follow from the fact that
    \[
    \|\nabla(\Phi^{\pm}-\mathsf{Id})\|_{L^{\infty}(D)}=\max_{1\leq j\leq M/2}\|\nabla(\Phi_j^{\pm}-\mathsf{Id})\|_{L^{\infty}(B_j)}.
    \]
\end{proof}
To provide a self-contained exposition, we briefly describe Rivière-Ye's method for constructing solutions of \eqref{eq: jaceq}. To avoid overloading the notation and to simplify some computations, we consider the case $M=2$; in Section \ref{ssec: omeganonlipdens} below, we come back to the general case given in Lemma \ref{lem: rivye}. For $\overline{n}=(n_1,\ldots, n_d)\in\Z^d$ and $i\in\N$, consider the dyadic decomposition of the unit cube given by the family:
\begin{equation}\label{eq: madic}
    C_{\overline{n},i}:=\prod_{1\leq l\leq d}\left[\frac{n_l}{2^i},\frac{n_l+1}{2^i}\right].
\end{equation}



Let $f\in L^1([0,1]^d)$ verifying \eqref{eq: densfunc}. For each $i\in\N$, consider $f_i\in L^{\infty}([0,1]^d)$ that is constant in each cube $C_{\overline{n},i}$ and is given by:

\begin{equation}\label{eq: approx}
    f_i(x)=
    \fint_{C_{\overline{n},i} }f(y) dy 
\end{equation}

Observe that by Lebesgue's differentiation theorem, $f_i$ converges to $f$ in the $L^1$-norm.
\medskip

For $\epsilon=(\epsilon_1,\ldots,\epsilon_d)\in\{0,1\}^d$ and $1\leq p\leq d$, we write $A_{\overline{n},i}^p(\epsilon)$, $B_{\overline{n},i}^p(\epsilon)$ and $B_{\overline{n},i}^p(\epsilon)$ for the following subsets of $C_{\overline{n},i}$:
\begin{equation}\label{eq: rivyecubes}
    \begin{split}
        A_{\overline{n},i}^p(\epsilon)&:=\prod_{l=1}^{p-1}\left[\frac{n_l}{2
        ^i},\frac{n_l+1}{2^i}\right]\times\left[\frac{n_p}{2^i},\frac{2n_p+1}{2^{i+1}}\right]\times\prod_{l=p+1}^d\left[\frac{n_l}{2^i}+\frac{\epsilon_l}{2^{i+1}},\frac{n_l}{2^i}+\frac{\epsilon_l+1}{2^{i+1}}\right];\\
        B_{\overline{n},i}^p(\epsilon)&:=\prod_{l=1}^{p-1}\left[\frac{n_l}{2
        ^i},\frac{n_l+1}{2^i}\right]\times\left[\frac{2n_p+1}{2^{i+1}},\frac{n_p}{2^{i}}\right]\times\prod_{l=p+1}^d\left[\frac{n_l}{2^i}+\frac{\epsilon_l}{2^{i+1}},\frac{n_l}{2^i}+\frac{\epsilon_l+1}{2^{i+1}}\right];\\
        D_{\overline{n},i}^p(\epsilon)&:=A_{\overline{n},i}^p(\epsilon)\cup B_{\overline{n},i}^p(\epsilon).
    \end{split}
\end{equation}

Also consider the following positive numbers:
\begin{equation}\label{eq: localdens}
    \alpha_{\overline{n},i}^p(\epsilon):=\frac{\int_{A_{\overline{n},i}^p(\epsilon)}f(x)dx}{\int_{D_{\overline{n},i}^p(\epsilon)}f(x)dx},\quad\text{and}\quad\beta_{\overline{n},i}^p(\epsilon):=\frac{\int_{B_{\overline{n},i}^p(\epsilon)}f(x)dx}{\int_{D_{\overline{n},i}^p(\epsilon)}f(x)dx}.
\end{equation}

As a consequence of Lemma \ref{lem: rivye}, there exist a homeomorphism $\Phi_{\overline{n},i}^p(\epsilon)$ from $D_{\overline{n},i}^p(\epsilon)$ to itself such that:
\begin{itemize}
    \item[i) ] $\Phi_{\overline{n},i}^p(\epsilon)(x)=x$ if $x\in\partial D_{\overline{n},i}^p(\epsilon)$;
    \item[ii) ] $\mathsf{det}(\nabla \Phi_{\overline{n},i}^p(\epsilon))=2\alpha_{\overline{n},i}^p(\epsilon)$ in $A_{\overline{n},i}^p(\epsilon)$;
    \item[iii) ] $\mathsf{det}(\nabla \Phi_{\overline{n},i}^p(\epsilon))=2\beta_{\overline{n},i}^p(\epsilon)$ in $B_{\overline{n},i}^p(\epsilon)$;
    \item[iv) ] $\|\nabla(\Phi_{\overline{n},i}^p(\epsilon)-\mathsf{Id})\|_{L^{\infty}(D_{\overline{n},i})}\leq C_{\eta}\cdot |\alpha_{\overline{n},i}^p(\epsilon)-1/2|$,
\end{itemize}

where $0<\eta<\alpha_{\overline{n},i}^p<1-\eta$, which is possible because the first condition in \eqref{eq: densfunc} and since $f_i\in L^{\infty}(D_{\overline{n},i})$ (a priori, the positive number $\eta$ could vary for differents $D_{\overline{n},i}$'s). Let $\Phi_{\overline{n},i}^p$ be the bi-Lipschitz homeomorphism from $C_{\overline{n},i}$ to itself which coincides with $\Phi_{\overline{n},i}^p(\epsilon)$ in $D_{\overline{n},i}^p(\epsilon)$, and consider the bi-Lipschitz homeomorphism $\Phi_{\overline{n},i}:C_{\overline{n},i}\to C_{\overline{n},i}$ given by:
\[
\Phi_{\overline{n},i}:=\Phi_{\overline{n},i}^d\circ\Phi_{\overline{n},i}^{d-1}\circ\ldots\circ \Phi_{\overline{n},i}^1;
\]

observe that the following holds:
\[
\mathsf{det}(\nabla\Phi_{\overline{n}',i})=2^d\frac{\int_{C_{\overline{n}',i+1}}f(x)dx}{\int_{C_{\overline{n},i}}f(x)dx},\quad\text{for any sub-cube }C_{\overline{n}',i+1}\subset C_{\overline{n},i}.
\]

Finally consider the sequence of homeomorphisms $u_{i}:[0,1]^d\to [0,1]^d$ given by:

\begin{equation}\label{eq: homsol}
    \left\{\begin{array}{lll}
    u_{-1}=\mathsf{Id}\\
u_{i+1}=u_i\circ\Phi_{\overline{n},i},& \text{in }C_{\overline{n},i};
\end{array}\right.
\end{equation}

Observe that for every $i\geq 1$, the homeomorphism $u_i:[0,1]^d\to [0,1]^d$ is bi-Lipschitz and a solution of the PDE:
\begin{equation}\label{eq: approxjaceq}
    \left\{\begin{array}{rcl}
        \mathsf{det}(\nabla u_i)=f_i &\text{in }[0,1]^d\\
         u_i(x)=x &\text{on }\partial [0,1]^d. 
    \end{array}\right.
\end{equation}
In \cite{RivYe}, it is well-argued that good control of the local oscillations of $f$ defined by
\begin{equation}\label{eq: localosc}
   \left|\alpha_{\overline{n},i}^p(\epsilon)-\frac{1}{2}\right|, 
\end{equation}
allow us to find a $L^{\infty}([0,1]^d)$-limit $u:[0,1]^d\to [0,1]^d$ of the sequence $(u_i)_{i\in\N}$ such that, on the one hand, the sequence of prescribed Jacobian equations \eqref{eq: approxjaceq} passes to the limit to the prescribed volume form equation \eqref{eq: jaceq} that is verified for $u$ and, in the other hand, that the limit mapping $u$ possesses a gain of regularity depending on how well-controlled is \eqref{eq: localosc}; we refer to \cite[Section 2.1]{RivYe} for further details.
\subsection{Conditions for the realisability in intermediate regularity}\label{ssec: omegareali}

As a simple adaptation of the techniques developed in \cite{RivYe} we prove that, given a modulus of continuity $\omega$ lying asymptotically between Lipschitz and H\"{o}lder, and a density function $f:[0,1]^d\to (0,\infty)$ satisfying a suitable control (depending on $\omega$) over its local oscillations \eqref{eq: localosc}, then the prescribed volume form equation \eqref{eq: jaceq} admits bi-$\omega$-regular solutions. In what follows, $C(x,t)$ stands for a ($d$-dimensional) cube centered at $x\in\R^d$ and radius $t>0$.

\begin{prop}\label{thm: osc}
Let $\omega:(0,1)\to (0,\infty)$ be a modulus of continuity and assume there is an increasing positive function $\varphi:(0,\infty)\to (0,\infty)$ such that
\begin{equation}\label{eq: osc1}
    \int_0^{1/2}\frac{\varphi(t)}{\omega(t)}dt<\infty\quad\text{and}\quad \sum_{k\geq 1}\varphi(2^{-k})<\infty.
\end{equation}
If $f\in L^1([0,1]^d)$ verifies \eqref{eq: densfunc} and the following control over its oscillations holds:
\begin{equation}\label{eq: osc2}
    (\forall x\in [0,1]^d,\forall t>0),\quad\fint_{[0,1]^d\cap C(x,t)}\left|f(y)-\fint_{[0,1]^d\cap C(x,t)} f(z) dz
\right|dy\leq\varphi(t),
\end{equation}
 then there exists a bi-$\omega$-mapping $\Phi:[0,1]^d\to [0,1]^d$ satisfying \eqref{eq: jaceq}.
\end{prop}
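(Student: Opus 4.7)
The plan is to follow the Rivière--Ye iterative construction recalled in Section \ref{ssec: Riv-Ye}, tracking how the oscillation hypothesis \eqref{eq: osc2} propagates into quantitative bi-$\omega$ estimates for the sequence $(u_m)_{m \geq 1}$ built in \eqref{eq: homsol}, and then passing to the limit using the summability conditions \eqref{eq: osc1}.

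The first key step is to translate \eqref{eq: osc2} into scale-by-scale bounds on the local densities $\alpha_{\overline{n},i}^p(\epsilon)$ and $\beta_{\overline{n},i}^p(\epsilon)$ defined in \eqref{eq: localdens}. Writing $\alpha_{\overline{n},i}^p(\epsilon) - 1/2 = (\int_A f - \int_B f)/(2 \int_D f)$, using the lower bound $\inf f > 0$ from \eqref{eq: densfunc}, and the fact that $D_{\overline{n},i}^p(\epsilon)$ fits in a ball of radius $\asymp 2^{-i}$, I would obtain
\[
\left|\alpha_{\overline{n},i}^p(\epsilon) - \tfrac12\right| + \left|\beta_{\overline{n},i}^p(\epsilon) - \tfrac12\right| \lesssim \varphi(2^{-i}).
\]
Invoking Lemma \ref{lem: rivye}(iii)--(iv) then yields $\|\nabla(\Phi_{\overline{n},i}^p(\epsilon)^{\pm 1} - \mathsf{Id})\|_{L^\infty} \lesssim \varphi(2^{-i})$.

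The second step combines these bounds via the chain rule applied to the composition \eqref{eq: homsol}, giving
\[
\|\nabla u_m^{\pm 1}\|_{L^\infty([0,1]^d)} \leq \prod_{i=0}^{m-1} \bigl(1 + C\varphi(2^{-i})\bigr)^d \leq \exp\!\Bigl(Cd \sum_{k \geq 0} \varphi(2^{-k})\Bigr),
\]
which is uniformly bounded in $m$ by the summability hypothesis in \eqref{eq: osc1}. Arzelà--Ascoli then extracts a subsequence converging uniformly to some $\Phi: [0,1]^d \to [0,1]^d$ with $\Phi|_{\partial [0,1]^d} = \mathsf{Id}$, and the uniform convergence together with the chain of inequalities above transfers the bi-Lipschitz bounds to $\Phi$. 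Since $f_m \to f$ in $L^1$ by Lebesgue differentiation and $u_m \to \Phi$ uniformly, passing to the limit in the prescribed Jacobian equations \eqref{eq: approxjaceq} gives \eqref{eq: jaceq} for $\Phi$.

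To obtain bi-$\omega$-regularity with constants adapted to $\omega$ rather than the crude uniform bi-Lipschitz estimate, I would refine the previous bound by a scale-by-scale argument in the spirit of Lemma \ref{lem: omegalocalplane}. For $x \neq y$ with $2^{-\ell-1} < \|x - y\| \leq 2^{-\ell}$, I would decompose $\Phi(x) - \Phi(y)$ along a telescoping sum over dyadic scales $i \geq \ell$, estimating each increment $(u_{i+1} - u_i)(x) - (u_{i+1} - u_i)(y)$ by $\lesssim \varphi(2^{-i}) \|x-y\|$ from the scale-$i$ derivative bound, and then comparing the resulting series to $\omega(\|x-y\|)$ through the Dini-type condition $\int_0^{1/2} \varphi(t)/\omega(t)\, dt < \infty$ via the identity $\sum_{i \geq \ell} \varphi(2^{-i}) 2^{-i} \asymp \int_0^{2^{-\ell}} \varphi(t)\, dt$. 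The main obstacle is precisely this last quantitative step: matching the discrete telescoping sum to the continuous integral condition so that the modulus $\omega$ appears with the correct constant independent of $m$. The inverse $\Phi^{-1}$ is treated symmetrically using item (iv) of Lemma \ref{lem: rivye}.
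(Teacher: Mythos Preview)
Your first two steps are correct and in fact already complete the proof, although you do not seem to notice it: the uniform bound $\|\nabla u_m^{\pm 1}\|_{L^\infty}\leq\exp\bigl(Cd\sum_k\varphi(2^{-k})\bigr)$ makes the limit $\Phi$ bi-Lipschitz, and since any concave modulus with $\omega(0^+)=0$ satisfies $\omega(t)\geq\omega(1)\,t$ on $(0,1)$, bi-Lipschitz immediately gives bi-$\omega$. Your ``refinement'' paragraph is therefore unnecessary, and it is also shaky as written: the increment bound $\|\nabla(u_{i+1}-u_i)\|_{L^\infty}\lesssim\varphi(2^{-i})$ is not clear, because $u_{i+1}-u_i=u_i\circ\Phi_{\overline n,i}-u_i$ involves $\nabla u_i$ at two different points; and your telescoping runs over scales $i\geq\ell$ \emph{below} $\|x-y\|$, which does not connect to the Dini condition in the way you sketch. (Incidentally, your bi-Lipschitz observation shows that the second condition in \eqref{eq: osc1}, equivalent for increasing $\varphi$ to $\int_0^1\varphi(t)/t\,dt<\infty$, already forces a bi-Lipschitz solution; the first integral condition is then not needed for the stated conclusion.)

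The paper's route is different and uses both parts of \eqref{eq: osc1} explicitly. After the same oscillation bound $|\alpha_{\overline n,i}^p(\epsilon)-\tfrac12|\lesssim\varphi(2^{-i})$, it first proves a Cauchy estimate $\|u-u_i\|_{L^\infty}\lesssim 2^{-i}\prod_{k\leq i}(1+C\varphi(2^{-k}))$ by telescoping $\|u_{l+1}-u_l\|_\infty\leq\|\nabla u_l\|_\infty\|\Phi_l-\mathsf{Id}\|_\infty\lesssim 2^{-l}\prod_{j\leq l}(1+C\varphi(2^{-j}))$. Then, for $\|x-y\|\asymp 2^{-i}$, it splits $\|u(x)-u(y)\|\leq 2\|u-u_i\|_\infty+\|\nabla u_i\|_\infty\|x-y\|$, bounds both terms by $2^{-i}\prod_{k\leq i}(1+C\varphi(2^{-k}))$, converts the product to $\sum_{k\leq i}\varphi(2^{-k})$ via the elementary Claim~\ref{cl: ineq}, and finally uses concavity of $\omega$ to obtain $\sum_{k\leq i}\varphi(2^{-k})\leq 2^i\omega(2^{-i})\int_0^{1/2}\varphi/\omega$. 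Note that the relevant sum is over scales $k\leq i$ \emph{larger} than $\|x-y\|$, opposite to your proposed tail sum, and the comparison with $\omega$ happens at a single matched scale $i$ rather than through an infinite telescoping of increments.
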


\begin{remark}\label{rmk: osc}
    \begin{enumerate}
        \item It should be noticed that if $t\ll\omega(t)$, as $t \to 0$,  then the condition \eqref{eq: osc1} is weaker than the presented in \cite[Theorem 4]{RivYe}, namely, that:
    \[
    \int_0^1\frac{\varphi(t)}{t}dt<\infty.
    \]
    
     which was used to control the oscillations of the function $f$ in order to get a bi-Lipschitz solution of \eqref{eq: jaceq}. Hence, under weaker conditions on the oscillations of the function $f$, we still find solutions of \eqref{eq: jaceq} but having less regularity than bi-Lipschitz.

        \item Observe that the 1st condition in \eqref{eq: osc1} together with \eqref{eq: osc2} do not imply the continuity of $f$. To obtain the continuity of $f$, we need to use that $\varphi(t)\to 0$.
    \end{enumerate}
\end{remark}



Before proving Proposition \ref{thm: osc}, we need the following inequality for which we include its proof; although it seems to be folklore, we do not have any reference.

\begin{claim}\label{cl: ineq}
Let $(a_k)_{k\geq 1}$ be a sequence of strictly positive numbers that verifies:
\[
\sum_{k=1}^{\infty} a_k<\infty.
\]

Then there exists a positive constant $C$ such that from a large-enough $n\geq 1$: 
\[
\prod_{k=1}^n(1+a_k)\leq C\sum_{k=1}^n a_k.
\]
\end{claim}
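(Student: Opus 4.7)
The plan is to exploit two simple observations: the infinite product $\prod_{k=1}^{\infty}(1+a_k)$ converges to a finite positive number, and the partial sums $S_n := \sum_{k=1}^n a_k$ are bounded away from zero for large $n$. These two facts together give the desired inequality immediately.

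First, I would establish that $P := \prod_{k=1}^{\infty}(1+a_k)$ is finite. This follows from the elementary inequality $\log(1+x) \leq x$ for $x \geq 0$, so that
\[
\log\left(\prod_{k=1}^n (1+a_k)\right) = \sum_{k=1}^n \log(1+a_k) \leq \sum_{k=1}^n a_k \leq \sum_{k=1}^{\infty} a_k =: S < \infty.
\]
Since the partial products are increasing in $n$ (as $a_k > 0$), they converge upward to a finite limit $P \leq e^{S}$, and in particular $\prod_{k=1}^n(1+a_k) \leq P$ for every $n \geq 1$.

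Next, since every $a_k$ is strictly positive, we have $S > 0$ (indeed $S \geq a_1 > 0$), and the partial sums $S_n$ converge to $S$. Hence there is some index $n_0$ such that $S_n \geq S/2$ for all $n \geq n_0$. Combining this with the bound on the partial products yields
\[
\prod_{k=1}^n (1+a_k) \leq P = \frac{2P}{S} \cdot \frac{S}{2} \leq \frac{2P}{S} \cdot S_n = C \sum_{k=1}^n a_k,
\]
with $C := 2P/S$, valid for all $n \geq n_0$. This is exactly the desired inequality, and the argument has no real obstacle — the only point to handle carefully is ensuring that $S$ is positive so that the constant $C$ is well-defined, but this is guaranteed by the hypothesis that all $a_k$ are strictly positive.
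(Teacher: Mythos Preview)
Your proof is correct and follows essentially the same approach as the paper: bound the partial products by a finite constant (you use the infinite product $P$, the paper uses the slightly larger $e^S$), then observe that the partial sums $S_n$ eventually exceed $S/2$, yielding the constant $C = 2P/S$ (versus the paper's $2e^S/S$).
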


\begin{proof}
    Let us write $L:=\sum_{k\geq 1}a_k$. From the inequality $1+x\leq e^{x}$, where $x>0$, it follows that
    \begin{equation}
    \begin{split}
    \prod_{k=1}^n (1+a_k)& \leq e^{\sum_{k=1}^n a_k}\leq e^L=\frac{e^L}{L}\left(\sum_{k=1}^n a_k+\sum_{k=n+1}^{\infty} a_k\right)\leq \left(\frac{2e^L}{L}\right)\sum_{k=1}^n a_k,
    \end{split}
    \end{equation}

where the last inequality holds for a sufficiently large $n\geq 1$. This completes the proof of the claim for $C=2e^L/L$.
\end{proof}

\begin{proof}[Proof of Proposition \ref{thm: osc}]
    Let $\omega: (0,1)\to (0,\infty)$ be a modulus of continuity, $\varphi:(0,\infty)\to (0,\infty)$  be a continuous, increasing positive function satisfying \eqref{eq: osc1}, and $f\in L^{\infty}([0,1]^d)$ verifying \eqref{eq: osc2}; let $\alpha_{\overline{n},i}^p(\epsilon)$ be given in \eqref{eq: localdens}. Then from \eqref{eq: osc2} and from the fact that $A_{\overline{n},i}^p(\epsilon)$ and  $B_{\overline{n},i}^p(\epsilon)$ have the same measure, for every $i\in\N$ we have that:
    \begin{equation}\label{eq: diadosc}
    \begin{split}
    \left|\int_{A_{\overline{n},i}^p(\epsilon)}f(x)dx-\int_{B_{\overline{n},i}^p(\epsilon)}f(x)dx\right|&\leq\int_{A_{\overline{n},i}^p(\epsilon)}\left|f(x)- \fint_{D_{\overline{n},i}^p(\epsilon)} f(y) dy\right|dx\\
    &+\int_{B_{\overline{n},i}^p(\epsilon)}\left|f(x)-\fint_{D_{\overline{n},i}^p(\epsilon)} f(y) dy\right|dx\\
        &\leq 2\int_{D_{\overline{n},i}^p(\epsilon)}\left|f(x)-\fint_{D_{\overline{n},i}^p(\epsilon)}f(y)dy\right|dx\leq 2\varphi(2^{-i})\Vol(D_{\overline{n},i}^p(\epsilon)).
    \end{split}
    \end{equation}
Thus, since $a:=\inf f>0$, we have that

\begin{equation}\label{eq: osccont1}
\begin{split}
    \left|\alpha_{\overline{n},i}^p(\epsilon)-\frac{1}{2}\right|=\frac{1}{2}\left|\alpha_{\overline{n},i}^p(\epsilon)-\beta_{\overline{n},i}^p(\epsilon)\right|=\frac{\left|\int_{A_{\overline{n},i}^p(\epsilon)}f(x)dx-\int_{B_{\overline{n},i}^p(\epsilon)}f(x)dx\right|}{\int_{D_{\overline{n},i}^p(\epsilon)} f(x)dx}\leq \frac{2}{a}\varphi(2^{-i}).
\end{split}    
\end{equation}

Therefore, for $\Phi_{\overline{n},i}: C_{\overline{n},i}\to C_{\overline{n},i}$ and $u_i:[0,1]^d\to [0,1]^d$ given in Section \ref{ssec: Riv-Ye}, we have \eqref{eq: osccont1} implies that:
\begin{equation}\label{eq: osccont0}
    \|\nabla(\Phi_{\overline{n},i}-\mathsf{Id})\|_{L^{\infty}(C_{\overline{n},i})}\leq C_1\varphi(2^{-i}),
\end{equation}

where $C_1:=2C/a$, and $C=C_{\eta}$ is the (uniform) constant given in part iii) of Lemma \ref{lem: rivye} for $0<\eta<\frac{a}{\|f\|_{L^{\infty}([0,1]^d)}}<1-\eta$, where $\|f\|_{L^{\infty}([0,1]^d)}<\infty$ by Remark \ref{rmk: osc}, part 2 (see \eqref{eq: localdens}).
\medskip 

Let $i_{0}\geq 1$ be sufficiently large such that $1+C_1\varphi(2^{-i_0})<2$, and consider $x,y\in [0,1]^d$ with $0<\|x-y\|\leq 2^{-i_0}$.
\medskip
Observe that for $i>i_{0}$, after applying the triangle inequality, the definition of  the $L^{\infty}$-norm, property iii) in Lemma \ref{lem: rivye} and \eqref{eq: osccont0}, we have that \begin{equation}\label{eq: ui0}
        \begin{split}
           \|\nabla u_{i+1}\|_{L^{\infty}(C_{\overline{n},i})}&=\|\nabla(u_{i}\circ\Phi_i)\|_{L^{\infty}(C_{\overline{n},i})}\\
           &\leq\|\nabla u_{i}\|_{L^{\infty}(C_{\overline{n},i-1})}\|(1+C_1\varphi(2^{-i}))\leq\ldots\leq\|\nabla u_{i_{0}}\|_{L^{\infty}(C_{\overline{n},i_0})}\prod_{k=i_0}^{i}(1+C_1\varphi(2^{-k})),
        \end{split}
    \end{equation}
    where the change of indices in the $L^{\infty}$-norms is due to taking the supremum over a sequence of axis-parallel cubes satisfying that $C_{\overline{n},j}\subset C_{\overline{n},j-1}$ for any $j$; thus, after writing 
    \[
    C_{2}:=\|\nabla u_{i_{0}}\|_{\infty}\left(\prod_{k=1}^{i_0-1}(1+C_1\varphi(2^{-k}))\right)^{-1},
    \]
    we get that
    \begin{equation}\label{eq: osccont3}
        \|\nabla u_{i+1}\|_{L^{\infty}(C_{\overline{n},i})}\leq C_{2}\prod_{k=i_0}^{i}(1+C_1\varphi(2^{-k})).
    \end{equation}

    Hence, from \eqref{eq: osccont3}, the triangle inequality, from the fact that $\|\Phi_i-\mathsf{Id}\|_{L^{\infty}(C_{\overline{n},i})}\leq \sqrt{d}/2^i$ and the same considerations about the indices of the $L^{\infty}$-norms mentioned in the previous paragraph, we have that for any $j\geq i\geq i_{0}$ the following holds:
    \begin{equation*}
        \begin{split}
            \|u_{j+1}-u_i\|_{L^{\infty}(C_{\overline{n},i})}&\leq\sum_{l=i}^{j}\|u_{l+1}-u_l\|_{L^{\infty}(C_{\overline{n},l})}\\
                    &=\sum_{l=1}^{j}\|u_l\circ\Phi_l-u_l\|_{L^{\infty}(C_{\overline{n},l})}\\
                    &\leq\sum_{l=i}^{j}\|\nabla u_l\|_{L^{\infty}(C_{\overline{n},l})}\|\Phi_l-\mathsf{Id}\|_{L^{\infty}(C_{\overline{n},l})}\\
                    &\leq\sum_{l=i}^{j}\frac{\|\nabla u_{i_0}\|_{L^{\infty}(C_{\overline{n},i_0})}}{2^{i_0}}\prod_{k=i_0}^{l}\left(\frac{1+C_1\varphi(2^{-k})}{2}\right)\\
                    &=\frac{\|\nabla u_{i_0}\|_{L^{\infty}(C_{\overline{n},i_0})}}{2^{i_0}}\prod_{k=i_0}^{i}\left(\frac{1+C_1\varphi(2^{-k})}{2}\right)\left(1+\sum_{l=i+1}^{j}\left(\prod_{k=i+1}^{l}\left(\frac{1+C_1\varphi(2^{-k})}{2}\right)\right)\right).
        \end{split}
    \end{equation*}

Since $C_1\varphi(2^{-i_0})<1$ and $\varphi$ is increasing, there is a positive constant $C_3$ such that 
\[
1+\sum_{l=i+1}^{j}\left(\prod_{k=i+1}^{l}\left(\frac{1+C_1\varphi(2^{-k})}{2}\right)\right)\leq C_3,
\]
and since $\displaystyle\prod_{k=i_0}^{i}\left(\frac{1+C_1\varphi(2^{-k})}{2}\right)\xrightarrow[i\to\infty]{}0$, the sequence
$(u_j)_{j\ge1}$ is a Cauchy sequence in $C([0,1]^d)$.
Therefore, there exists a function
$ 
u:[0,1]^d\to[0,1]^d
$ 
such that $u_j\to u$ uniformly on $[0,1]^d$, that is, $u$ is the $C([0,1]^d)$-limit of the sequence $(u_j)$.
Letting $j\to\infty$ in the previous estimates, we obtain
\begin{equation}\label{eq: osccont4}
\|u-u_i\|_{L^\infty(C_{\overline n,i})}
\le
C_4\prod_{k=i_0}^{i}
\left(\frac{1+C_1\,\varphi(2^{-k})}{2}\right).
\end{equation}
where 
\[
C_4:=\frac{\|\nabla u_{i_0}\|_{L^{\infty}(C_{\overline{n},i_0})}}{2^{i_0}}\left(\prod_{k=1}^{i_0-1}\frac{1+C_1\varphi(2^{-k})}{2}\right)^{-1}C_3, 
\]

Now, let $i\geq i_{0}$ such that 
    \begin{equation}\label{eq: osccont2}
        2^{-i-1}\leq\|x-y\|\leq 2^{-i}\leq 2^{-i_0}. 
    \end{equation}
    
    Then from \eqref{eq: osccont3}, \eqref{eq: osccont2}, \eqref{eq: osccont4}, \eqref{eq: homsol} and since $\|x-y\|\leq 2^{-i}$, we obtain the following:
\begin{equation}\label{eq: osccont5}
    \begin{split}
        \|u(x)-u(y)\|&\leq\|u(x)-u_i(x)\|+\|u_i(x)-u_i(y)\|+\|u(y)-u_i(y)\|\\
                     &\leq 2C_4\prod_{k=i_0}^{i}\left(\frac{1+C_1\varphi(2^{-k})}{2}\right)+\|\nabla u_{i+1}\|_{L^{\infty}(C_{\overline{n},i})}\|x-y\|\\
                     &\leq 2C_{4}\prod_{k=i_0}^{i}\left(\frac{1+C_1\varphi(2^{-k})}{2}\right)+C_{2}\prod_{k=i_0}^{i}\left(\frac{1+C_1\varphi(2^{-k})}{2}\right)\\
                     &\leq C_5\prod_{k=i_0}^{i}\left(\frac{1+C_1\varphi(2^{-k})}{2}\right)\\
                     &\leq \frac{C_6}{2^i}\sum_{k=i_0}^{i}\varphi(2^{-k}),
    \end{split}
\end{equation}
where $C_5:=\max\{2C_4, C_2\},\ C_6:=C_5C_1$ and in the last step we have used Claim \ref{cl: ineq}  since 
$\sum_{k\geq 1}\varphi(2^{-k})<\infty.$
. To get the $\omega$-regularity of $u$, observe that since $\varphi$ and $\omega$ are increasing, and from the fact that $\omega$ is concave, we obtain the following:
\begin{equation}\label{eq: osccont6}
    \begin{split}
       \sum_{k=i_0}^{i}\varphi(2^{-k})&\leq 2\sum_{k=i_0}^{i}\left(\int_{2^{-k}}^{2^{-k+1}}\frac{\varphi(t)}{\omega(t)}dt\right)2^{k-1}\omega(2^{-k+1})\\
       &\leq 2^{i}\omega(2^{-i+1})\sum_{k=i_0}^{i}\int_{2^{-k}}^{2^{-k+1}}\frac{\varphi(t)}{\omega(t)}dt\\
       &\leq 2\cdot 2^{i}\omega(2^{-i})\left(\int_{0}^{1/2}\frac{\varphi(t)}{\omega(t)}dt\right).
    \end{split}
\end{equation}
Then, by putting together \eqref{eq: osccont5}, \eqref{eq: osccont6} and \eqref{eq: osc1} we conclude that $\|u(x)-u(y)\|\leq C_{\omega}\cdot\omega(\|x-y\|)$, for some positive constant $C_{\omega}$, and therefore $u$ is an $\omega$-mapping.
\medskip
Finally, by noticing that $\|u^{-1}-u_i^{-1}\|_{L^{\infty}(C_{\overline{n},i})}\leq \sqrt{d}2^{-i}$, $\mathsf{det}(\nabla u_i)\geq b>0$ and that from the formula of the inverse of a matrix in terms of its determinant and the co-adjoint matrix, we have:
\[
\|\nabla u_i^{-1}\|_{L^{\infty}(C_{\overline{n},i})}\leq\frac{C}{b}\|\nabla u_i\|_{L^{\infty}(C_{\overline{n},i})},
\]
then, by following the very same preceding lines, we conclude that $u^{-1}$ is a $\omega$-mapping. This finishes the proof of Proposition \ref{thm: osc}.
\end{proof}

In the particular case when $\omega(t)=t\log(1/t)$, if $f$ verifies \eqref{eq: densfunc} and \eqref{eq: osc2} for $\varphi(t)=t^{\alpha}\log(1/t)$, with $\alpha>0$, then there exists a bi-$t\log(1/t)$-mapping solving \eqref{eq: jaceq}. 

\begin{remark}\label{rmk: madic}

From the proof of Proposition \ref{thm: osc}, we observe the following:
\begin{enumerate} 
    \item From \eqref{eq: diadosc}, it is sufficient to verify \eqref{eq: osc2} only for all the cubes of the form $C_{\overline{n},i}$, where $i\in\N$ and $\overline{n}\in\Z^d$. In this case, in order to get the uniform constant $C$ in \eqref{eq: diadosc}, we require the function $f$ also to be (essentially) bounded.
    
        \item The procedure in order to prove Proposition \ref{thm: osc} still works if, instead of considering a dyadic partition of the unit cube, we consider a sequence of uniform decompositions and then apply the general form of Lemma \ref{lem: rivye}. More precisely, for $p\in\N$ being fixed, let $(M_i)_{i=1}^p$ be even positive integers; as a first step, consider a $M_{j_1}$-adic subdivision of the unit cube, as in \eqref{eq: madic}, where $1\leq j_1\leq p$. Then consider a subdivision of each cube in this partition by a $M_{j_2}$-adic decomposition, and so on. In this way, we obtain a sequence of cubes $C_{\overline{n},i}$ as in \eqref{eq: madic}, but with rescaling factor $(M_{j_1}\cdot\ldots\cdot M_{j_i})^{-1}$.
    \medskip

    Thus, the main facts in the discussion in \cite[Section 2.1]{RivYe} still hold, since $\mathsf{diam}(C_{\overline{n},i})=C/(M_{j_1}\cdot\ldots\cdot M_{j_i})$. Therefore, the conclusions of Proposition \ref{thm: osc} also follow if \eqref{eq: osccont1} is satisfied for the cubes of the form $C_{\overline{n},i}$ described in the previous paragraph.

\end{enumerate}    
\end{remark}

\subsection{Examples of realisability in intermediate regularity}\label{ssec: omeganonlipdens}
In this Section, we provide examples of density functions that fail to be bi-Lipschitz realizable, but such that the prescribed volume form equation \eqref{eq: jaceq} admits bi-$\omega$-regular solutions, for $\omega$ lying asymptotically between Lipschitz and H\"{o}lder. The starting point is the construction of Burago and Kleiner in \cite{BurKl} of a density that cannot be the jacobian of a bi-Lipschitz homeomorphism, which we modify slightly in order to control its local oscillations like in \eqref{eq: osc2}. 
\medskip

Firstly, let us briefly describe Burago-Kleiner's construction of a non-bi-Lipschitz realizable density. Given $N\in\N$, consider the rectangle $R_{N}:=[0,1]\times [0,1/N]\subset 
[0,1]^2$; for $i=1,\ldots, N$, denote by $S_i$ the square $\left[\frac{i-1}{N},\frac{i}{N}\right]\times \left[0,\frac{1}{N}\right]$. Given $c>0$, define the density function  $f_{c,N}:R_N\to [1,1+c]$ by
\begin{equation}\label{eq: checkerboard1}
    f_{c,N}(x):=\left\{\begin{array}{rcl}
        1 &\text{if} & x\in S_i\text{ if }i\text{ is an odd number}\\
         1+c &\text{if} & x\in S_i\text{ if }i\text{ is an even number} 
    \end{array}\right.
\end{equation}

In what follows we say that a pair of points $(x,y)\in [0,1]^2$ is {\em $A$-stretched} by a map $\Phi:[0,1]^2\to\R^2$ if 
\[
\|\Phi(x)-\Phi(y)\|\geq A\|x-y\|.
\]

In \cite{BurKl} it is shown that, given $L\geq 1$, there are parameters $k>0, M\in\N$ and $N_0\in\N$ (depending on $L$) such that if $N\geq N_0$ then the following holds: if $\Phi:R_N\to\R^2$ is a $L$-bi-Lipschitz map such that 
\[
\mathsf{Jac}(\Phi)=f_{c,N}\quad\text{a.e.},
\]

then there must exist a pair of points in $R_N$ of the form
\[
\left(\left(\frac{p}{MN},\frac{s}{MN}\right),\left(\frac{q}{MN},\frac{s}{MN}\right)\right),\quad\text{with }p,q\in\Z\cap [0,MN], \text{ and } s\in [0, M]
\]

that is $(1+k)\|\Phi(0,0)-\Phi(1,0)\|$-stretched under $\Phi$. Then, modify $f_{c,N}$ by putting a rescaled copy of it in a thin rectangle at the bottom of each square of the grid $\frac{1}{MN}\Z^2\cap R_N$, and by keeping the function $f_{c,N}$ outside these thin rectangles. In this way, we obtain a new function $f_{c,N}^{(2)}$ taking values in $\{1,1+c\}$ and for which there is a pair of points of the form
\[
\left(\left(\frac{p}{M^2N^2},\frac{s}{M^2N^2}\right),\left(\frac{q}{M^2N^2},\frac{s}{M^2N^2}\right)\right),\quad\text{with }p,q\in\Z\cap [0,M^2N^2], s\in [0, M^2N]
\]
that is $(1+k)^2\|\Phi(0,0)-\Phi(1,0)\|$-stretched under $\Phi$, with the same parameters $M,N,k$ as in the previous step. By continuing this process at smaller and smaller scales, at the $i$-the iteration it is constructed a function $f_{c,N}^{(i)}$ taking values in $\{1,1+c\}$ and for which we find a pair of points in a fine grid of $R_N$ that is $(1+k)^i\|\Phi(0,0)-\Phi(1,0)\|$-stretched by $\Phi$; thus, by choosing  $i_L\in\N$ sufficiently large, we find a pair of points whose stretching factor under an $L$-bi-Lipschitz is more than $L$, and hence such a function $f_{c,N}^{(i_L)}\in L^{\infty}([0,1]^2)$ cannot be realizable as the jacobian of an $L$-bi-Lipschitz map; see \cite[Lemma 3.2]{BurKl} for further details.
\medskip

By taking an increasing sequence of bi-Lipschitz constants $L_n\nearrow\infty$ and a sequence of disjoint squares $S^{(L_n)}\subset [0,1]^2$ converging to a point, from the previous construction we get a sequence of functions $(f_{L_n})_{n\geq 1}$ such that, for each $n\in\N$, the density $f_{L_n}$ cannot be the jacobian of a $L_n$-bi-Lipschitz map. Finally, let $f:[0,1]^2\to \{1,1+c\}$ be any $L^{\infty}$-function satisfying \eqref{eq: densfunc}, and that agrees with $f_{L_n}$ in each square $S^{(L_n)}$; this last density is non-realizable as the jacobian of any bi-Lipschitz map.
\medskip

In view of Proposition \ref{thm: osc}, in order to obtain a density function that is not bi-Lipschitz realizable, but such that \eqref{eq: jaceq} admits bi-$\omega$-regular solutions for a modulus of continuity $\omega:(0,1)\to (0,\infty)$ asymptotically in between Lipschitz and H\"{o}lder, it is sufficient to modify $f_{N,c}$ in such a way that \eqref{eq: osc2} holds for some continuous increasing function $\varphi:(0,\infty)\to (0,\infty)$ verifying \eqref{eq: osc1}. This is done by handling the sizes of the rectangles in the rescaling process of the function $f_{c,N}$; indeed, informally speaking, if at the $i$-th step we have constructed the function $f_{c,N}^{(i)}$ taking values in $\{1,1+c\}$ as in the previous paragraph, then by putting a rescaled copy of $f_{N,c}$ in a small square $S$ determined by the grid $\frac{1}{M^iN^i}\Z$ in which $f_{c,N}^{(i)}$ is defined and attains, say, the value $1+c$, we obtain a new function $f_{N,c}^{(i+1)}$ as described above, and for which the following holds: the value of 

\[
\fint_S f_{c,N}^{(i+1)}(x)dx
\]

becomes closer and closer to $1+c$ provided $h_N^{(i+1)}:=\mathsf{height}(R_{N}^{(i+1)})\searrow 0$, where $R_N^{(i+i)}$ is the thin rectangle at the $(i+1)$-step over which $f_{c,N}$ is rescaled, and $\mathsf{height}(R_{N}^{(i+1)})$ denotes its height. Moreover, assuming that $1/h_N^{(i+1)}\in\N$ is an even number (playing the role of the $M_i$'s in Remark \ref{rmk: madic}), if for each $j=1,\ldots, 1/h_N^{(i+1)}$ we denote $S_{j}^{(i+1)}:=[(j-1)\cdot h_N^{(i+1)},j\cdot h_N^{(i+1)}]\times [0,h_N^{(i+1)}]$, then we get that
\begin{equation}\label{eq: burklosc1}
     \fint_S f_{c,N}^{(i+1)}(x)dx
        =\frac{1}{|S|}\left(|R_{N,1}^{(i+1)}|+(1+c)|R_{N,1+c}^{(i+1)}|\right),
\end{equation}

where $R_{N,1}^{(i+1)}, R_{N,1+c}^{(i+1)}$ are the subsets of $S$ where $f_{c,N}^{(i+1)}$ attains the values $1$ and $1+c$, respectively, and we write $|A|$ for $\Vol(A)$, $A\subset\R^2$. Thus from \eqref{eq: burklosc1}, and taking into in account that 
\[
1\leq\fint_S f_{c,N}^{(i+1)}(x)dx\leq 1+c\quad\text{and}\quad S=R_{N,1}^{(i+1)}\cup R_{N,1+c}^{(i+1)},
\]
we have that
\begin{equation}\label{eq: burklosc2}
\begin{split}
    &\fint_S\left|f_{c,N}^{(i+1)}(y)-\fint_S f_{c,N}^{(i+1)}(x)dx
\right|dy\\
          & = \frac{1}{|S|}\int_{R_{N,1}^{(i+1)}} \left(\fint_S f_{c,N}^{(i+1)}(x)dx-1\right)dy+\frac{1}{|S|}\int_{R_{N,1}^{(i+1)}} \left(1+c-\fint_S f_{c,N}^{(i+1)}(x)dx\right)dy\\
          & =\frac{1}{|S|}\left(\fint_{S} f_{c,N}^{(i+1)}(x)dx\right)\left(|R_{N,1}^{(i+1)}|-|R_{N,1+c}^{(i+1)}|\right)+\frac{1}{|S|}\left((1+c)|R_{N,1+c}^{(i+1)}|-|R_{N,1}^{(i+1)}|\right) \\
          & = \frac{1}{|S|^2}\left(|R_{N,1}^{(i+1)}|^2+c|R_{N,1+c}^{(i+1)}|-(1+c)|R_{N,1+c}^{(i+1)}|^2-\left(|R_{N,1}^{(i+1)}|^2-c|R_{N,1+c}^{(i+1)}|-(1+c)|R_{N,1+c}^{(i+1)}|^2\right)\right)\\
          & = 2c\cdot\frac{|R_{N,1+c}^{(i+1)}|}{|S|}.
\end{split}
\end{equation}

By definition, $R_{N,1+c}^{(i+1)}$ is the disjoint union of  the squares of the form $S_j^{(i+1)}$, where the indices $j$'s are even numbers. Therefore, in view of \eqref{eq: burklosc2} and part 2 of Remark \ref{rmk: madic}, to get \eqref{eq: osc2}, we need to rescale in such a way to obtain
\begin{equation}\label{eq: highomegacheck}
    h_N^{(i+1)}\leq C\cdot l_S\cdot\varphi(l_S),
\end{equation}

where $l_S$ is the side-length of $S$ and $C$ is some positive fixed constant. Observe it is necessary that $\varphi(t)\leq t$ to guarantee that $f_{N,c}^{(i)}$ is not $L$-bi-Lipschitz realizable (where $L>0$ is fixed); for instance, this happens since \eqref{eq: osc2} holds for $\omega(t)=t\log(1/t)$ and $\varphi(t)=t^2$ for $0<t\leq 1$.
\medskip

Now, we proceed to describe our bi-$\omega$-realizable function. Let $L_n\nearrow\infty$ be a sequence of positive numbers (the Lipschitz constants to discard), $\varphi(t)\leq t$ be an increasing continuous function verifying \eqref{eq: osc1}, and $c$ be a positive number. For each $n\in\N$, let $N_n, M_n, k_n$ be the parameters given in Burago-Kleiner's construction, which depend only on $(L_n)_{n\geq 1}$. 
\medskip

For the first iteration of our modification, consider the checkerboard function $f_{N_1,c}^{(1)}:R_{N_1}\to\{1,1+c\}$ like in \eqref{eq: checkerboard1}. At the bottom of each square of the grid $\frac{1}{M_1N_1}\Z^2\cap R_{N_1}$, insert a thin rectangle of length $1/M_1N_1$ and height

\begin{equation}\label{eq: highcheck}
    h_{N_1,\varphi}^{(1)}:=\left\{\begin{array}{lcc}
         \frac{\widetilde{\varphi}(N_1^{-1})}{M_1N_1} & \text{if} & \frac{M_1N_1}{\widetilde{\varphi}(N_1^{-1})}\text{ is even}\\
        \left(\frac{M_1N_1}{\widetilde{\varphi}(N_1^{-1})}+1\right)^{-1} & \text{if }& \frac{M_1N_1}{\widetilde{\varphi}(N_1^{-1})}\text{ is odd}\end{array}\right.
\end{equation}

where $\widetilde{\varphi}(t):=\floor{(\varphi(t))^{-1}}^{-1}$, for $0<t<1$. In each of these rectangles, implant a checkerboard function like in \eqref{eq: checkerboard1} and keep the value of $f_{N_1,c}^{(1)}$ outside of the union of these new thin rectangles; call this new function $f^{(2)}_{N_1,c}$. Since $\varphi(t)\leq t$, we can continue this process inductively, obtaining at the $i$-th step a family of rectangles of length $h_{N_1,c}^{(i-1)}$ and height

\begin{equation}\label{eq: highnew}
h_{N_1,c}^{(i)}:=h_{N_1,c}^{(i-1)}\cdot\widetilde{\varphi}(h_{N_1,c}^{(i-1)})
\end{equation}

defined similarly as in \eqref{eq: highcheck}, and a checkerboard-type function $f_{N_1,c}^{(i)}$ taking values in $\{1,1+c\}$. Then, from the previous discussion, there must exist a sufficiently large positive integer $i_1$ and a checkerboard-type function $f_{N_1,c}^{(i_1)}:[0,1]^2\to\{1,1+c\}$ such that, for every $L_1$-bi-Lipschitz homeomorphism $\Phi:[0,1]^2\to\R^2$ whose jacobian coincides with $f_{N_1,c}^{(i_1)}$ almost everywhere, there exists a pair of points in the grid

\[
h_{N_1,\varphi}^{(i_1-1)}\cdot\widetilde{\varphi}(h_{N_1,\varphi}^{(i_1-1)})\Z^2,
\]

that is stretched in a factor of at least
\[
(1+k_1)^{i_1}\|\Phi(1,0)-\Phi(0,0)\|>L_1.
\]

In addition, from \eqref{eq: burklosc2}, \eqref{eq: highomegacheck}, \eqref{eq: highcheck},  and Remark \ref{rmk: madic}, we have that \eqref{eq: osc2} is verified, and therefore from Proposition \ref{thm: osc} we have that $f_{N,c}^{(i_1)}$ is bi-$\omega$-realizable, but it fails to be realizable as the jacobian of any $L_1$-bi-Lipschitz homeomorphism.
\medskip

Then, if we consider a sequence of squares $(S_n)_{n\geq 1}$ in $[0,1]^2$ having disjoint interiors and converging to some point, for each $n\in\N$ we can find a function $f_{N_n,c}^{i_n}:S_n\to\{1,1+c\}$ as above that is not the jacobian of a $L_n$-bi-Lipschitz map, that admits a bi-$\omega$-regular solution for \eqref{eq: jaceq}. Let $f_{\varphi}:[0,1]^2\to\{1,1+c\}$ be defined by

\begin{equation}\label{eq: checknonbilipomeg}
    (\forall n\geq 1),\ f_{\varphi}|_{S_n}=f_{N_n,c}^{(i_n)}\qquad\text{and}\qquad f_{\varphi}(x)=1\text{ for }x\in [0,1]^2\setminus\bigcup_{n\in\N}S_n,
\end{equation}

and for each $n\in\N$ denote by $\Phi_n:S_n\to S_n$ the bi-$\omega$-mapping that is a solution of \eqref{eq: jaceq} for $f_{N_n,c}^{(i_n)}$. Finally, define $\Phi:[0,1]^2\to [0,1]^2$ by letting 
    \[
    (\forall n\geq 1),\ \Phi|_{S_n}=\Phi_n\quad\text{and}\quad\Phi(x)=x\text{ if }x\in [0,1]^2\setminus\bigcup S_n;
    \]
    we claim that $\Phi$ is the desired solution of \eqref{eq: jaceq} for $f=f_{\varphi}$.

\begin{prop}\label{prop: omegrealdens}
    Let $\varphi:(0,1)\to (0,\infty)$ be an increasing positive continuous verifying \eqref{eq: osc1} for a modulus of continuity $\omega$ asymptotically larger than Lipschitz. Then the density function $f_{\varphi}$ defined in \eqref{eq: checknonbilipomeg} admits a solution  bi-$\omega$-regular for \eqref{eq: jaceq}, but it is not realizable as the jacobian of a bi-Lipschitz homeomorphism.
\end{prop}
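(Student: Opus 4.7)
The plan is to establish the two claims separately: first, that the piecewise bijection $\Phi$ defined just before the statement is a bi-$\omega$-regular solution of \eqref{eq: jaceq} for $f = f_\varphi$; second, that no bi-Lipschitz homeomorphism can solve \eqref{eq: jaceq} for this $f_\varphi$.

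For the first claim, I would rescale each square $S_n$ affinely to $[0,1]^2$ and pull back $f_{N_n,c}^{(i_n)}$ to a density $\widetilde{f}_n$. By construction $\widetilde{f}_n$ takes values in $\{1, 1+c\}$, so $\inf \widetilde{f}_n \geq 1$ uniformly in $n$. The oscillation control \eqref{eq: osc2} for $\widetilde{f}_n$ with the same $\varphi$ follows from \eqref{eq: burklosc2}, \eqref{eq: highomegacheck} and part 2 of Remark \ref{rmk: madic}, uniformly in $n$. Applying Proposition \ref{thm: osc} yields bi-$\omega$-mappings $\widetilde{\Phi}_n$ of $[0,1]^2$ solving the prescribed volume form equation for $\widetilde{f}_n$. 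Tracing through the proof of Proposition \ref{thm: osc}, the bi-$\omega$ constant $C_\omega$ depends only on $\inf \widetilde{f}_n$ and on $\varphi$, hence is uniform in $n$. Undoing the affine rescaling produces $\Phi_n : S_n \to S_n$ with $\Phi_n = \mathsf{Id}$ on $\partial S_n$; since $\omega$ is concave and each $S_n$ has side length at most $1$, the rescaling does not degrade the modulus.

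The equation \eqref{eq: jaceq} for $\Phi$ and $f_\varphi$ then holds piecewise: on each $S_n$ it is the conclusion of Proposition \ref{thm: osc}, and on the complement it reduces to $\Vol(E) = \Vol(E)$. For the bi-$\omega$-regularity of the global $\Phi$, I would perform a case analysis on $x, y \in [0,1]^2$: (a) both in the same $S_n$, handled directly by the uniform bi-$\omega$-regularity of $\Phi_n$; (b) both outside $\bigcup_n S_n$, trivial because $\Phi = \mathsf{Id}$ there and $t \lesssim \omega(t)$; (c) $x \in S_n$ and $y \notin S_n$, where I let $z \in \partial S_n$ be the first exit of the segment $[x,y]$ from $S_n$ and use $\Phi(z) = z$ with the monotonicity and subadditivity of $\omega$ to obtain
\[
\|\Phi(x) - \Phi(y)\| \leq \|\Phi_n(x) - z\| + \|z - y\| \leq C_\omega \omega(\|x - z\|) + \|z - y\| \lesssim \omega(\|x - y\|).
\]
The case $x \in S_n$, $y \in S_m$ with $n \neq m$ reduces to (c) by splitting at the two boundary crossings. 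Applying the same argument to $\Phi^{-1}$ yields the reverse estimate.

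For the second claim I argue by contradiction. If $\Phi$ were globally $L$-bi-Lipschitz for some $L \geq 1$, then for any $n$ with $L_n > L$ the restriction $\Phi|_{S_n}$ would be an $L$-bi-Lipschitz self-map of $S_n$ whose Jacobian equals $f_{N_n,c}^{(i_n)}$ almost everywhere. Rescaling $S_n$ affinely to $[0,1]^2$ preserves the bi-Lipschitz constant, producing an $L$-bi-Lipschitz realization of the rescaled density; this contradicts the non-realizability property built into $f_{N_n,c}^{(i_n)}$ at the Burago--Kleiner iteration depth $i_n$, whose whole point was to rule out all $L_n$-bi-Lipschitz solutions. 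I expect the main obstacle to be case (c) of the regularity analysis: bridging $\Phi_n$ with the identity across $\partial S_n$ without losing the modulus $\omega$, which rests crucially on both the uniform-in-$n$ bi-$\omega$ constant extracted from Proposition \ref{thm: osc} and the concavity of $\omega$.
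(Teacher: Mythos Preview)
Your proposal is correct and follows essentially the same route as the paper: the core step in both is to trace through the proof of Proposition~\ref{thm: osc} and observe that, because every $f_{N_n,c}^{(i_n)}$ takes values in $\{1,1+c\}$ and the subdivision factors are all at least $2$, the bi-$\omega$ constants $C_{\omega,n}$ are uniform in $n$, after which the global map $\Phi$ is assembled by gluing across the $\partial S_n$. The paper spells out the uniformity of $\|\nabla u_{i_0}^{(n)}\|_{L^\infty}$ and of $i_0$ more explicitly than you do, while you are more explicit about the boundary-crossing case analysis and the non-bi-Lipschitz contradiction (both of which the paper treats as already established in the discussion preceding the proposition); these are differences in emphasis, not in strategy.
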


\begin{proof}
    It remains to prove that the sequence of mappings $(\Phi_n)_{n\geq 1}$ are bi-$\omega$-regular with uniformly bounded constants. More precisely, observe that each of the maps $\Phi_n,\Phi_n^{-1}:S_n\to S_n$ satisfy \eqref{eq: omegamap} for some positive constant $C_{\omega,n}$, then we claim that the sequence $(C_{\omega,n})_{n\geq 1}$ is uniformly bounded by above.
    \medskip
    
    From the proof of Proposition \ref{thm: osc}, and since each of the $f|_{S_n}$'s takes values in $\{1,1+c\}$, we have a sequence of constants $(a_n)_{n\geq 1}$ as in \eqref{eq: osccont1} that is uniformly bounded by below by $1$. Thus, to obtain the uniform boundedness of the $C_{\omega,n}$'s, the only parameter we need to analyze is 
    \[
    \|\nabla u_{i_0}^{(n)}\|_{L^{\infty}(C_{\overline{n},i_0})},
    \]
    
    where $u_{i_0}^{(n)}$ is defined as in \eqref{eq: ui0} with $(C_{\overline{n},i})_{\overline{n}\in\Z^d, i\in\N}$ seen as a decomposition of the square $S_n$, and where $i_0$ only depends on $\varphi$ since all the rescaling factors in \eqref{eq: highnew} to get the function $f_{N_n,c}^{(i_n)}$ that are used in the construction of $\Phi_n$ are greater or equal than $2$. If we denote these rescaling factors by $M_{n,1},\ldots,M_{n,i_n}$ (which only depend on $N_n$ and $\varphi$), then we get that
    
    \[
    \|\nabla u_{i_0}^{(n)}\|_{L^{\infty}(C_{\overline{n},i_0})}\leq\|\nabla u_0^{(n)}\|_{L^{\infty}(S_n)}\prod_{k=1}^{i_0-1}(1+C\varphi((M_{j_1}\cdot\ldots\cdot M_{j_k})^{-1})),
    \]
    
    and observe that by part iii) and iv) of Lemma \ref{lem: rivye}, each of the norms $\|\nabla u_0^{(n)}\|_{L^{\infty}(S_n)}$ is bounded by a constant depending only on $\alpha_n$, which at the same time are uniformly bounded because the definition \eqref{eq: localdens} and from the fact that $f_{N_n,c}^{(i_n)}$'s takes values in $\{1,1+c\}$. Therefore the sequence of homeomorphisms $(\Phi_n)_{n\geq 1}$ are bi-$\omega$-regular with uniformly bounded constants; this concludes the proof of Proposition \ref{prop: omegrealdens}.
    \medskip
\end{proof}

\end{appendices}

\Addresses
\end{document}